\documentclass[12pt,letterpaper]{amsart}

\usepackage{graphicx}

\usepackage[top=1in, bottom=1in, left=1in, right=1in]{geometry}






\usepackage[utf8]{inputenc}
\usepackage{amssymb,amsthm,amsmath,amsxtra,bbm}
\usepackage{enumitem,fancyhdr}
\usepackage{mathtools}
\usepackage{expl3}

\usepackage{times}
\usepackage{bm}
\usepackage{dsfont}
\usepackage{scrextend}
\usepackage{mathrsfs}

\usepackage[table,dvipsnames]{xcolor}
\usepackage{color}
\definecolor{red}{rgb}{1,0,0}
\definecolor{orange}{rgb}{0.7,0.3,0}
\definecolor{blue}{rgb}{0,.3,.7}
\definecolor{green}{rgb}{0,.6,.4}

\PassOptionsToPackage{hyphens}{url}\usepackage[colorlinks=true, linkcolor=NavyBlue, citecolor=teal, urlcolor=gray]{hyperref}   

\urlstyle{rm}


\newcommand{\NN}{\mathbb{N}}
\newcommand{\ZZ}{\mathbb{Z}}

\newcommand{\RR}{\mathbb{R}}

\newcommand{\PP}{\mathbb{P}}
\newcommand{\EE}{\mathbb{E}}

\newcommand{\pextra}{P_{\mathtt{extra}}}
\newcommand{\dtv}{\mathrm{d}_{\mathrm{TV}}}

\newcommand{\bolddelta}{\boldsymbol{\delta}}

\newcommand{\dee}{\,\mathrm{d}}

\newcommand{\one}{\mathds{1}}

\newcommand{\RS}{\mathscr{R}}

\renewcommand{\le}{\leqslant}
\renewcommand{\ge}{\geqslant}

\renewcommand{\epsilon}{\varepsilon}

\theoremstyle{plain}
\newtheorem{cor}{Corollary}[section]
\newtheorem{lemma}[cor]{Lemma}

\newtheorem{prop}[cor]{Proposition}

\newtheorem{theorem}{Theorem}
\newtheorem{mainlemma}{Main Lemma}

\theoremstyle{definition}

\newtheorem{deff}[cor]{Definition}

\theoremstyle{remark}

\newtheorem*{remss}{Remarks}

\makeatletter
\ams@newcommand{\multiint}[1]{\DOTSI\protect\MultiIntegral{#1}}
\renewcommand{\MultiIntegral}[1]{%
	\edef\ints@c{\noexpand\intop
		\ifnum#1=\z@\noexpand\intdots@\else\noexpand\intkern@\fi
		\replicate{#1-2}{\noexpand\intop\noexpand\intkern@}%
		\noexpand\intop
		\noexpand\ilimits@
	}%
	\futurelet\@let@token\ints@a
}
\makeatother
\ExplSyntaxOn
\cs_new:Npn \replicate #1 #2 { \prg_replicate:nn { #1 } { #2 } }
\ExplSyntaxOff


\usepackage[labelfont=bf, width=\linewidth]{caption}

\usepackage{hypcap}   
\usepackage{bookmark} 

\newcommand{\thetasmall}{\Theta_{\infty}^{(1)}}
\newcommand{\thetalarge}{\Theta_x^{(2)}}

\newcommand{\floor}[1]{\lfloor #1 \rfloor}

\newcommand{\primes}{\mathbf{Primes}}
\newcommand{\pahk}{\mathbb{P}_\star}
\newcommand{\eahk}{\mathbb{E}_\star}

\newcommand{\ymin}{y_{\min}}
\newcommand{\zmin}{z_{\min}}
\newcommand{\umin}{u_{\min}}
\newcommand{\vmin}{v_{\min}}

\newcommand{\bg}{\big}
\newcommand{\bgg}{\Big}
\newcommand{\bggg}{\bigg}
\newcommand{\bgggg}{\Bigg}

\title[Poisson--Dirichlet approximation]{Poisson--Dirichlet approximation for counting\\integers with divisors in an interval}

\author{Tony Haddad}
\address{Departement of Mathematics and Statistics\\
	University of Turku\\
	20014 Turku\\
	Finland}
\email{{\tt tohadd@utu.fi}}

\date{\today}

\begin{document}

\begin{abstract}
    We give a simple inequality that compares the laws of two random variables taking values in a convex subset of a normed vector space. By combining this with Arratia's coupling, recently refined by Koukoulopoulos and the author, we obtain a general strategy to reduce the problem of finding an asymptotic formula for the number of integers whose prime factorization lies in any given subset of $\ell^1(\mathbb R)$, to bounding two key probabilities measuring proximity to the boundary of the subset in question.  

    We apply this strategy to obtain an asymptotic formula for counting integers in $[1, x]$ that have a divisor in an interval $(y, z)$ in the regime $z/y \to \infty$ as $x \to \infty$.
\end{abstract}

\maketitle

\section{Introduction}

    Let $H(x, y, z)$ be the number of integers in $[1, x]$ having a divisor in the open interval $(y, z)$. The size of this function was studied in various ranges over the last century. In 1934, Besicovitch \cite{besicovitch1934} showed that
    \[
        \liminf_{y \to \infty} \lim_{x \to \infty} \frac{H(x, y, z)}{x} = 0
    \]
    when $z = 2y$. In 1935, Erd\H os \cite{erdos1935} replaced the $\liminf$ by $\lim$ in the statement above, and he later showed in \cite{erdos1936} a similar statement for any $z = z(y)$ which satisfies $\frac{\log z/y}{\log y} \to 0$ as $y \to \infty$. Erd\H os later proved in 1960 that 
    \[
        \lim_{x \to \infty} \frac{H(x, y, 2y)}{x}= (\log y)^{-\delta + o(1)}
    \]
    with $\delta \coloneqq 1-\frac{1+\log_2 2}{\log 2} = 0.086\ldots$ as $y \to \infty$. 
    
    In 1980, Tenenbaum \cite{tenenbaum1980} proved that the following limit exists for all $0\le u<v\le 1$:
	\begin{equation}
    \label{eq:tenenbaum limit result}
	h(u, v) \coloneqq \lim_{x \to \infty} \frac{H(x, x^{u}, x^v)}{x}.
	\end{equation} 
    In 1984, Tenenbaum \cite{tenenbaum1984} gave precise upper and lower bounds for $H(x, y, z)$ in various ranges of $(x, y, z)$, and in 2008, Ford \cite{ford2008} was able to determine the order of magnitude of $H(x, y, z)$ for all $(x, y, z)$ satisfying $2 \le y < z \le x$. One of the key estimates obtained in \cite{ford2008} is the following:
	\begin{equation}
		\label{eq:ford result}
		H(x, y, z) \asymp xw^{\delta}(\log 2/w)^{-3/2}
	\end{equation}
	whenever $2y \le z \le \min\{y^2, x\}$, $100 \le y \le x^{2/3}$ and $x$ is sufficiently large and with $w \coloneqq \tfrac{\log z/y}{\log y}$. 
    
    The main theorem of this paper gives the range of $(x, u, v)$ for which the density $h(u, v)$ approximates well the function $H(x, x^u, x^v)/x$:

    \begin{theorem}
        \label{theo:asymp for hxyz}
        Let $\delta \coloneqq 1 - \frac{1+\log_2 2}{\log 2}$. We have
        \[
            H(x, y, z) = x \cdot h\bg(\tfrac{\log y}{\log x}, \tfrac{\log z}{\log x}\bg) + O\bggg(\frac{x}{(\log \ymin)^{\delta}(\log_2 \ymin)^{3/2}}\bggg)
        \] 
        for all $3 \le y < z \le x/3$, with $\ymin \coloneqq \min\{y, x/z\}$.
    \end{theorem}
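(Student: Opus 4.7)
The plan is to apply the general strategy announced in the abstract: combine the refined Arratia coupling with the convex-set comparison inequality to reduce the asymptotic for $H(x, y, z)$ to the probability that a random integer has a divisor close to $y$ or to $z$, and then invoke Ford's order-of-magnitude estimate \eqref{eq:ford result} to control these boundary contributions.

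\textbf{Setup.} For a uniformly chosen $n \in [1, x]$, encode the prime factorization of $n$ as the decreasing sequence $\boldell_n \in \ell^1$ obtained by listing $\log p / \log x$ with multiplicity, once for each prime power factor $p^a \| n$. Writing $u = \log y / \log x$ and $v = \log z / \log x$, set
\[
    A_{u,v} = \Big\{ \boldell \in \ell^1 : \sum_{i \in S} \ell_i \in (u, v) \text{ for some } S \subseteq \NN \Big\}.
\]
Then $\PPN(\boldell_n \in A_{u,v}) = H(x, y, z)/\lfloor x \rfloor$, and Tenenbaum's result \eqref{eq:tenenbaum limit result} identifies the $x\to\infty$ limit as $h(u, v) = \PP(\thetasmall \in A_{u,v})$, where $\thetasmall$ is the Poisson--Dirichlet$(1)$ sequence.

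\textbf{Comparison.} Build a joint realization of $\boldell_n$ and $\thetasmall$ via the refined Arratia coupling and feed it into the convex-set inequality proved in this paper. This yields, for any scale $\eta > 0$, an upper bound of the shape
\[
    \bg|\PPN(\boldell_n \in A_{u,v}) - \PP(\thetasmall \in A_{u,v})\bg| \ll \PP(\|\boldell_n - \thetasmall\| > \eta) + \PPN(\boldell_n \in \partial_\eta A_{u,v}) + \PP(\thetasmall \in \partial_\eta A_{u,v}),
\]
where $\partial_\eta A_{u,v}$ is the set of sequences with some subset sum within distance $\eta$ of $u$ or of $v$.

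\textbf{Boundary estimates.} The event $\{\boldell_n \in \partial_\eta A_{u,v}\}$ says precisely that $n$ has a divisor in an interval of log-length $2\eta \log x$ centered at $\log y$ or at $\log z$. Choosing $\eta = c/\log x$ with a small absolute constant $c$ makes these intervals have bounded ratio, so that the parameter $w$ from \eqref{eq:ford result} satisfies $w \asymp 1/\log y$, and Ford's bound gives
\[
    \PPN(\boldell_n \in \partial_\eta A_{u,v}) \ll (\log y)^{-\delta} (\log_2 y)^{-3/2}.
\]
The analogous Poisson--Dirichlet bound follows by letting $x \to \infty$ in Ford's estimate applied to a single test interval. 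Multiplying the resulting bound by $\lfloor x \rfloor$ produces the claimed error term.

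\textbf{Main obstacle.} The critical technical point is that the refined Arratia coupling must deliver $\PP(\|\boldell_n - \thetasmall\| > c/\log x) \ll (\log y)^{-\delta}(\log_2 y)^{-3/2}$ — in other words, the coupling precision must match Ford's sharp order of magnitude, since any weaker bound will dominate the final error. A secondary difficulty is ensuring uniformity throughout the full range $3 \le y < z \le \sqrt x$: very small $y$ falls outside Ford's hypothesis $y \ge 100$ and has to be treated by a trivial bound, while $y$ close to $\sqrt x$ pushes the coupling toward its limit of resolution, so the cases near both endpoints of the range require separate verification.
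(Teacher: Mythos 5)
Your overall plan---encode prime factorizations in $\ell^1$, compare against the Poisson--Dirichlet law via the convex-set inequality plus Arratia's coupling, and control the boundary by Ford's estimate---is indeed the framework of the paper, and your boundary computation is correct. But you have correctly identified, in your ``Main obstacle'' paragraph, the precise point at which the plan breaks, and it does break: no coupling can deliver $\PP(\|\boldell_n - \thetasmall\| > c/\log x) \ll (\log y)^{-\delta}(\log_2 y)^{-3/2}$ with a \emph{deterministic} constant $c$. Since $\|\mathbf V\| = 1$ while $\|\primes(N_x, x)\| = \log N_x/\log x$, we have $\|\primes(N_x, x) - \mathbf V\| \ge 1 - \log N_x/\log x$, and $\PP[N_x < xe^{-c}] = 1 - e^{-c} + O(1/x)$, so the tail at scale $c/\log x$ is bounded below by the positive constant $1 - e^{-c}$ for \emph{any} coupling of $\nu_x$ and $\mu$. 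The refined coupling of \cite{hk2025} gives $\pahk[\|\primes(N_x,x)-\mathbf V\|\ge\eta]\ll_\epsilon x^{-\eta/(4+\epsilon)}+1/\log x$, so to get a small tail you must take $\eta \gg \frac{\delta\log_2 x + (3/2)\log_3 x}{\log x}$; plugging that $\eta$ into Ford's bound \eqref{eq:ford result} yields $w\asymp\frac{\log_2 y}{\log y}$ and hence only the weaker error $(\log y)^{-\delta}(\log_2 y)^{\delta-3/2}$, which is exactly estimate \eqref{eq:weaker H(x,y,z) estimate} in the paper, not Theorem \ref{theo:asymp for hxyz}.

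What the paper actually does is replace the deterministic threshold $\eta$ with the \emph{random} threshold $\max\{S_x, T_x\}$ from \eqref{eq:def of Sx and Tx}, using Theorem \ref{thm:ineq under hyp} rather than Theorem \ref{thm:ineq unconditional}. This converts the problem into bounding $\pahk[\{\primes(N_x,x),\mathbf V\}\subseteq\partial_{\max\{S_x,T_x\}}D(u,v)]$, which requires two genuinely new ingredients: a number-theoretic boundary lemma (Main Lemma \ref{lem:nt boundary lemma}) for the joint event that $N_x$ has a divisor near $y$ at a scale controlled by the squarefull part $N_x^\sharp$, and a Poisson--Dirichlet boundary lemma (Main Lemma \ref{lem:pd boundary lemma}) for the analogous joint event with the random variable $T_x$ built from the Poisson process. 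These lemmas exploit the \emph{correlation structure} of the coupling---the fact that when the random scale is large, the squarefull part of $N_x$ is large (or the small points of the Poisson process are numerous), which can be played off against Ford's bound. Your proposal has no analogue of this; it requires the coupling alone to be sharp at a fixed scale, which is impossible. So the gap is not a loose end but a wrong turn: to recover the full $(\log_2 y)^{-3/2}$ error term you need the random-threshold comparison and the two boundary lemmas of Sections \ref{sec:short proofs}--\ref{sec:pdbl}.
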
    

    By using \eqref{eq:ford result}, this theorem yields an asymptotic when $x\to \infty$, provided that $y$ and $z$ are functions of $x$ satisfying $z/y \to \infty$.

    In the same paper where Tenenbaum proved that the limit $h(u, v)$ exists \cite{tenenbaum1980}, he also gave an explicit formula for it in the range $(u, v) \in [0, \frac{2}{5}] \times \{\frac{1}{2}\}$. We give an explicit formula for $h(u, v)$ for any $u, v \in [0, 1]$ satisfying $u < v$:
    
    \begin{theorem}
    \label{thm:explicit formula for h(u,v)}
        For any fixed $u, v \in [0, 1]$ satisfying $u < v$, let $k \coloneqq \floor{\frac{1}{v-u}}$, and let $\Delta^k$ be the simplex $\{(x_1, \ldots, x_k) \in \RR^k : x_1 \ge \cdots \ge x_k \ge 0 \text{ and } \sum_i x_i \le 1\}$. Let $\rho \colon [0,\infty) \to \RR$ be the Dickman function defined by the delay differential equation $u\rho'(u)=-\rho(u-1)$ for $u > 1$, and the initial condition $\rho([0, 1]) = \{1\}$. For any family $\mathcal E$ in the power set $2^{2^{\NN\cap[1, k]}}$, we also define the compact convex polytope $P_{u, v}(\mathcal E)$ as
        \begin{align*}
            P_{u, v}(\mathcal E) \coloneqq \bggg\{\mathbf x \in \Delta^k : \ &\sum_{i \in E} x_i \ge v \text{ for all $E \in \mathcal E$,} \text{ and } \sum_{i \in E} x_i \ge 1-u \text{ for all $E \in 2^{\NN \cap [1, k]} \setminus \mathcal E$}\bggg\}.
        \end{align*}
        Then we have 
        \[
            h(u, v) = 1- \sum_{\mathcal E \in 2^{2^{\NN\cap [1, k]}}} \underset{(t_1, \ldots, t_k) \in P_{u, v}(\mathcal E)}{\int\dots\int} \rho\left(\frac{1-\sum_{i=1}^k t_i}{t_k}\right)\frac{\dee t_1 \cdots \dee t_k}{t_1 \cdots t_k}
        \]
        for every $u, v\in [0, 1]$ with $u < v$.
    \end{theorem}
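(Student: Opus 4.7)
The plan is to transfer the problem to the Poisson--Dirichlet setting via Theorem~\ref{theo:asymp for hxyz}, then perform an explicit case analysis in the limit. Writing $(X_1 \ge X_2 \ge \cdots)$ for a PD$(1)$ sample (the scaled logarithms of the prime factors of a ``random'' integer near $x$), the first main theorem together with the PD coupling yields
\[
1 - h(u, v) \;=\; \PP\!\left[\sum_{i \in E} X_i \notin (u, v) \text{ for every } E \subseteq \NN\right],
\]
the PD analogue of ``$n$ has no divisor in $(x^u, x^v)$''. The value $k = \lfloor 1/(v-u) \rfloor$ is natural: at most $k$ of the atoms $X_i$ can exceed $v - u$, so all ``large'' atoms lie among the top $k$.

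The next step is to condition on the top $k$ atoms $(X_1, \ldots, X_k) = (t_1, \ldots, t_k) \in \Delta^k$. Their joint density under PD$(1)$ is the classical expression
\[
\rho\!\left(\frac{1 - \sum_{i=1}^k t_i}{t_k}\right) \frac{1}{t_1 \cdots t_k}
\]
(cf.\ Arratia--Barbour--Tavar\'e), the Dickman factor reflecting the $t_k$-smoothness of the remaining mass. This matches the integrand of the theorem exactly, so the task reduces to describing the admissible region for $(t_1, \ldots, t_k)$ as a union of polytopes.

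The combinatorial core is a dichotomy for each $E \subseteq [1, k]$: writing $s_E = \sum_{i \in E} t_i$ and $T = \sum_{i=1}^k t_i$, the subset sums of the small part ($i > k$) are a.s.\ dense in $[0, 1-T]$ at the scale $t_k$. Hence ``no shifted subset sum lands in $(u, v)$'' is equivalent, modulo negligible boundary, to the assertion that for every $E$ either $s_E \ge v$ (the divisor $d_E = \prod_{i \in E} p_i$ is already $\ge x^v$) or $T - s_E \ge 1 - u$ (the co-divisor $n/d_E$ is $\le x^u$). Recording the subsets of the first type in the family $\mathcal E$ carves out precisely the polytope $P_{u,v}(\mathcal E)$ of the theorem. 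Summing over $\mathcal E \in 2^{2^{\NN \cap [1, k]}}$ and integrating the top-$k$ density over each polytope gives the formula.

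The main obstacle is justifying the dichotomy: one must show that for a.e.\ top-$k$ configuration, the subset sums of the small PD part fill $[0, 1-T]$ densely enough (at the scale $t_k$) to rule out any ``middle'' outcome in which $s_E + \sigma \in (u, v)$ for some non-trivial sub-divisor $\sigma$ of the $t_k$-smooth remainder. A related technical point is that the sum over $\mathcal E$ has $2^{2^k}$ terms, the vast majority corresponding to empty polytopes; one must check that the nontrivial terms tile the event consistently, without overlap on the shared boundaries where $s_E$ coincides with $v$ or $T - s_E$ with $1 - u$.
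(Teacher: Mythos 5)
Your overall strategy is the one the paper follows: establish $h(u,v)=\mu(D(u,v))$ via the quantitative comparison of Section~2, pass to the explicit finite-dimensional density of the Poisson--Dirichlet law for the top $k$ atoms, and carve $\Delta^*\cap D(u,v)^c$ into polytopes indexed by a choice, for each $E\subseteq\{1,\dots,k\}$, of which of the two threshold inequalities holds. The joint density you quote for $(V_1,\dots,V_k)$ is exactly the one used in the paper, and your dichotomy ``$s_E\ge v$ or $T-s_E\ge 1-u$'' is the right condition on the projected coordinates. So the skeleton is correct.

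Where you diverge is in how the reduction from subsets of $\NN$ to subsets of $\{1,\dots,k\}$ is justified, and this is precisely the place where you flag a ``main obstacle''. You frame the step probabilistically: you condition on $(V_1,\dots,V_k)$, invoke ``a.s.\ density'' of the tail subset sums at scale $t_k$, and ask for the dichotomy ``modulo negligible boundary''. The paper's Main Lemma~\ref{lem:polytopes} proves this reduction as a \emph{deterministic} set identity on $\Delta^*$, with no exceptional null set at all. The mechanism is the partial-sum jump argument in equation~\eqref{eq:removing tail}: for $\mathbf x\in D(u,v)^c$ with $\phi_E(\mathbf x)\ge v$, the non-decreasing sequence $\gamma_j=\phi_{E\cap[1,j]}(\mathbf x)$ starts at $0$, tends to something $\ge v$, and avoids $(u,v)$, so it must jump across $(u,v)$ at a single index $j^*$; and the jump size satisfies $x_{j^*}\ge v-u>\tfrac{1}{k+1}\ge x_{k+1}$, forcing $j^*\le k$. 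The observation that every coordinate beyond the $k$-th is too small to carry a subset sum across the gap $(u,v)$ is exactly what fixes $k=\lfloor 1/(v-u)\rfloor$, and it makes your density heuristic unnecessary. You should realize this is pointwise, not ``a.s.'', and in particular you do not need any separate argument about the conditional distribution of the small atoms given the large ones. Likewise, your worry about tiling without overlap is resolved automatically once the conditions for $E\in\mathcal E$ and $E\notin\mathcal E$ are mutually exclusive on the relevant set, which makes the polytopes in the decomposition genuinely disjoint; the boundary hyperplanes have Lebesgue measure zero and contribute nothing to the integrals.

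In short: same destination, same map, but you are proposing to drive across a bridge that the paper shows is a footpath. Replacing your ``a.s.\ density modulo negligible boundary'' paragraph with the deterministic jump argument above would close the gap you correctly identified, and would make the whole proof elementary.
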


    It is not a coincidence that the integral in this theorem involves the joint density of the first $k$ terms of a Poisson--Dirichlet process. In fact, the function $H(x, y, z)$ counts integers with some constraints on the size of all its prime factors, and as we will see, the Poisson--Dirichlet process is a good model for the prime factorization of a random integer.

    At the end of the next section, we reduce Theorem \ref{theo:asymp for hxyz} and Theorem \ref{thm:explicit formula for h(u,v)} to a few key lemmas. We will first explain a general strategy to make this reduction possible on a family of problems which includes the theorems above.

    \subsection*{Notation}
    In some parts of the paper, we will use the unique factorization of positive integers $n=n^\flat n^\sharp$ with $n^\flat$ being squarefree, $n^\sharp$ being squarefull, and $\gcd(n^\flat, n^\sharp) = 1$. The variable $p$ is always reserved for prime numbers unless stated otherwise. We let $\log_j$ denote the $j$-iteration of the natural logarithm, meaning that $\log_1 = \log$ and $\log_j = \log \circ \log_{j-1}$ for $j \ge 2$. The norm $\|\cdot\|$ will always be the $\ell^1$-norm, except in the statement and proof of Proposition \ref{prop:fundamental ineq} which considers $\|\cdot\|$ to be any arbitrary norm. If $P$ is some proposition, then the indicator $\one_P$ will be equal to $1$ if $P$ is true, and $0$ if $P$ is false. To describe various estimates, we use Vinogradov's notation $f(x) \ll g(x)$ or Landau's notation $f(x) = O(g(x))$ to mean that $|f(x)| \le C \cdot g(x)$ for a positive constant $C$. If $C$ depends on a parameter $\alpha$, we write $f(x) \ll_\alpha g(x)$ or $f(x) = O_\alpha(g(x))$.

    \subsection*{Acknowledgments}
    The author is grateful to Kevin Ford, Dimitris Koukoulopoulos, Kaisa Matomäki and Gérald Tenenbaum for helpful comments on this project. The author was supported by the Courtois Chair II in fundamental research at the start of the project, and is now supported by the Research Council of Finland grant no. 364214.

    \section{Reduction to boundary lemmas}

    \subsection{Theorem \ref{theo:asymp for hxyz} and \ref{thm:explicit formula for h(u,v)} under a general framework}
    
    Let $\ell^1(\RR)$ take its usual definition: It is the Banach space of real sequences $\mathbf x = (x_i)_{i \ge 1}$ with the property that $\sum_{i \ge 1} |x_i| < \infty$. We equip it with the $\ell^1$-norm defined by $\|\mathbf x\| \coloneqq \sum_{i \ge 1} |x_i|$. We define $\Delta$ to be the set of sequences $\mathbf x \in \ell^1(\RR)$ satisfying both $x_1 \ge x_2 \ge \cdots \ge 0$ and $\|\mathbf x\| \le 1$.

    For any positive integer $n$, we set $(p_i(n))_{i \ge 1}$ to be the unique sequence satisfying the factorization $n=\prod_{i = 1}^{\infty} p_i(n)$, with $p_1(n) \ge p_2(n) \ge \cdots$ all being primes or ones. The following sequence of relative sizes of prime factors 
    \[
        \primes(n, x) \coloneqq \left(\frac{\log p_i(n)}{\log x}\right)_{i \ge 1}
    \]
    is in the set $\Delta$ for any $n, x$ satisfying $1 \le n \le x$. We assign two probability measures on the measurable space $(\Delta, \mathcal B(\Delta))$ with $\mathcal B(\Delta)$ being the Borel $\sigma$-algebra of $\Delta$: For any fixed $x \ge 2$, let $\nu_x$ be the probability measure defined by 
        \[ 
            \nu_x(A) \coloneqq \frac{1}{\floor x}\cdot \#\bgg\{1 \le n \le x : \primes(n, x) \in A\bgg\}.
        \]
        for any set $A \in \mathcal B(\Delta)$. Let $\mu$ be the Poisson--Dirichlet distribution of parameter $1$. One way to define this probability measure is with its finite-dimensional distributions\footnote{Note that finite-dimensional distributions of stochastic processes allow us to compute probabilities of events in cylindrical $\sigma$-algebras. In the case of $\ell^1(\RR)$, any closed balls of the form $\overline B(\mathbf y, r) \coloneqq \{\mathbf x \in \ell^1(\RR) : \|\mathbf x - \mathbf y\| \le r\}$ for some $\mathbf y \in \ell^1(\RR)$ and $r > 0$ can be generated by the countable intersection $\bigcap_{k=1}^\infty E_k(\mathbf y, r)$ with $E_k(\mathbf y, r)$ being the cylindrical set of real sequences $(x_i)_{i \ge 1}$ satisfying $\sum_{i=1}^k |x_i-y_i| \le r$. Since $\ell^1(\RR)$ is a separable space, any open set of $\ell^1(\RR)$ can be generated by closed balls, and therefore finite-dimensional distributions of the probability measure $\mu$ makes it well-defined on $\mathcal B(\Delta)$.}:
        \begin{equation}
        \label{eq:density of PD}
            \mu(Q_k(\mathbf u)) \coloneqq \int_{u_k}^\infty \cdots \int_{u_1}^\infty \frac{\one_{0 \le t_1 \le \cdots \le t_k} \cdot \one_{t_1 + \cdots + t_k \le 1}}{t_1 \cdots t_k}\rho\left(\frac{1-\sum_{i=1}^k t_i}{t_k}\right)\ \dee t_1 \cdots \dee t_k
        \end{equation}
        for all $k \ge 1$, $\mathbf u = (u_1, \ldots, u_k) \in \RR^k$ and with $Q_k(\mathbf u)$ being the set of real sequences $(x_i)_{i \ge 1}$ with $x_i \ge u_i$ for all $1 \le i \le k$. In the above definition, the function $\rho \colon [0,\infty) \to \RR$ is again the Dickman function as defined in Theorem~\ref{thm:explicit formula for h(u,v)}. It is important to keep in mind that this measure is supported on $\Delta^* \coloneqq \{\mathbf x \in \Delta : \|\mathbf x\| = 1\}$. Other equivalent definitions of $\mu$ are given in \cite[Chapter~2]{feng2010}. 

        For $0 \le u < v \le 1$, we consider the set $D(u, v)$ of sequences $\mathbf x \in \Delta$ that have a subsum in the open interval $(u, v)$. It becomes clear that this set is open in $\Delta$ if we rewrite it as a union of the preimage of continuous functions over open intervals
    \begin{equation}
    \label{eq:D(u,v) union def}
        D(u, v) = \bigcup_{E \subseteq \NN} \phi_E^{-1}\bg((u, v)\bg),
    \end{equation}
    with each $\phi_E \colon \Delta \to \RR$ being the continuous functions defined by $\phi_E(\mathbf x) \coloneqq \sum_{i \in E} x_i$. By the definition of the measure $\nu_x$, we have
    \begin{equation}
        \label{eq:H(x,y,z) in prob language}
        H(x, y, z) = \floor x \cdot \nu_x\bgg(D\bg(\tfrac{\log y}{\log x}, \tfrac{\log z}{\log x}\bg)\bgg),
    \end{equation}
    and as we will prove at the end of the section, we have
    \begin{equation}
        \label{eq:h(u,v) is PD}
        h(u, v) = \mu\bg(D(u, v)\bg).
    \end{equation}
    We will present a technique to bound $|\nu_x(A) - \mu(A)|$ for any Borel set $A$.
    
    \subsection{Comparing laws of random variables on normed vector spaces}
    
    Before moving on to approximating $\nu_x(A)$, we first present a simple inequality which will be fundamental in our approach:

    \begin{prop}
    \label{prop:fundamental ineq}
        Let $\mathcal W$ be a convex set in any normed vector space. For any subset $A \subseteq \mathcal W$, let $\partial A$ be its boundary with respect to the subspace topology of $\mathcal W$. For any $\eta \ge 0$, we write $\|w-\partial A\| \coloneqq \inf_{z \in \partial A} \|w-z\|$ and
        \[
            \partial_\eta A \coloneqq \bgg\{w \in \mathcal W : \|w - \partial A\| < \eta\bgg\}.
        \]
        Let $(\Omega, \mathcal F, \PP)$ be any probability space. For any three random variables $X, Y, R$ on this space with both $X$ and $Y$ taking values in $\mathcal W$, and with $R$ taking values in $\RR_{\ge 0}$, we have
        \begin{align*}
            \bggg|\PP\bg[X \in A\bg] - \PP\bg[Y \in A\bg]\bggg| \le \PP\bgg[\{X, Y\} \subseteq \partial_R A\bgg] + \PP\bgg[\|X-Y\|\ge R\bgg],
        \end{align*}
        for any Borel sets $A \subseteq \mathcal W$.
    \end{prop}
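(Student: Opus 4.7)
The plan is to bound the probability of the symmetric difference $\{X \in A\} \triangle \{Y \in A\}$ by splitting according to whether $\|X-Y\|$ is small or large relative to $R$, using convexity of $\mathcal W$ to run a line-segment argument. I would first record the elementary inequality
\[
\bggg|\PP[X \in A] - \PP[Y \in A]\bggg| \le \PP\bg[\{X \in A\} \triangle \{Y \in A\}\bg],
\]
so the proposition reduces, via a union bound, to the sample-path inclusion
\[
\{X \in A\} \triangle \{Y \in A\} \subseteq \bg\{\|X-Y\| \ge R\bg\} \cup \bg\{\{X, Y\} \subseteq \partial_R A\bg\}.
\]

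To prove this inclusion, I fix $\omega \in \Omega$ with $X(\omega) \in A$ and $Y(\omega) \notin A$ (the reverse case being symmetric), and assume $\|X(\omega) - Y(\omega)\| < R(\omega)$. By convexity of $\mathcal W$, the segment $\gamma(t) \coloneqq (1-t) X(\omega) + t Y(\omega)$ lies in $\mathcal W$ for every $t \in [0, 1]$. Setting $t^* \coloneqq \inf\{t \in [0,1] : \gamma(t) \notin A\} \in [0,1]$, the central step is to show $\gamma(t^*) \in \partial A$, with the boundary computed in the subspace topology of $\mathcal W$.

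By continuity of $\gamma$, two subcases handle this. If $\gamma(t^*) \in A$, the definition of $t^*$ furnishes a sequence $t_n \downarrow t^*$ with $\gamma(t_n) \notin A$, forcing $\gamma(t^*) \in \overline{\mathcal W \setminus A}$. If $\gamma(t^*) \notin A$, then $\gamma(t) \in A$ for $t < t^*$ and a leftward limit gives $\gamma(t^*) \in \overline{A}$. Either way $\gamma(t^*) \in \overline{A} \cap \overline{\mathcal W \setminus A} = \partial A$. The identities $\|X(\omega) - \gamma(t^*)\| = t^* \|X(\omega)-Y(\omega)\|$ and $\|Y(\omega) - \gamma(t^*)\| = (1-t^*)\|X(\omega)-Y(\omega)\|$ are each bounded by $\|X(\omega)-Y(\omega)\| < R(\omega)$, so $\{X(\omega), Y(\omega)\} \subseteq \partial_{R(\omega)} A$. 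This proves the inclusion, and taking probabilities yields the stated inequality.

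The main obstacle, modest as it is, is the topological step identifying $\gamma(t^*)$ with a boundary point, which hinges only on continuity of $\gamma$ together with the characterization $\partial A = \overline A \cap \overline{\mathcal W \setminus A}$ in the subspace topology of $\mathcal W$. A secondary subtlety is measurability of $\{\{X, Y\} \subseteq \partial_R A\}$, which is easily resolved by noting that $w \mapsto \|w - \partial A\|$ is $1$-Lipschitz and the event can be rewritten as $\{\|X - \partial A\| < R,\ \|Y - \partial A\| < R\}$.
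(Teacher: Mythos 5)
Your proof is correct and follows essentially the same strategy as the paper's: reduce to the symmetric-difference event via a union bound, use convexity to run the line segment from $X$ to $Y$ inside $\mathcal W$, observe that this segment must meet $\partial A$, and use that intersection point to show both endpoints lie in $\partial_R A$ when $\|X-Y\| < R$. The only cosmetic difference is that you identify the boundary point via an explicit exit-time infimum, while the paper invokes connectedness of the segment directly and records the intermediate bound $\|X-\partial A\| + \|Y-\partial A\| \le \|X-Y\|$ before splitting on $R$.
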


    \begin{proof}
    First, we have 
    \begin{align*}
        \bgg|\PP\bg[X \in A\bg] - \PP\bg[Y \in A\bg]\bgg| &= \bgg|\PP\bg[X \in A, Y \notin A\bg] - \PP\bg[X \notin A, Y \in A\bg]\bgg| \\
        &\le \max\bgg\{\PP\bg[X \in A, Y \notin A\bg],\ \PP\bg[X \notin A, Y \in A\bg]\bgg\}.
    \end{align*}
    For any two points of $a, b \in \mathcal W$, let $\mathfrak L (a, b)$ be the line segment between $a$ and $b$. Suppose that exactly one point of $\{a, b\}$ belongs to $A$, then $\mathfrak L(a, b)$ intersects $\partial A$ since line segments are always connected. Thus, there must exist $c \in \mathfrak L(a, b) \cap \partial A$ that satisfies the inequality
    \[
        \|a - \partial A\| + \|b - \partial A\| \le \|a - c\| + \|c - b\| = \|a - b\|,
    \]
    which implies
    \[
        \bgg|\PP\bg[X \in A\bg] - \PP\bg[Y \in A\bg]\bgg| \le \PP\bgg[\|X - \partial A\| + \|Y - \partial A\| \le \|X - Y\|\bgg].
    \]
    The proposition follows directly from this.
    \end{proof}

    \subsection{Approximating prime factorizations by a Poisson--Dirichlet process}

    In 1972, Billingsley \cite{billingsley1972} proved that for any fixed $\mathbf u = (u_1, \ldots, u_k) \in [0, 1]^k$, we have
    \[
        \nu_x\bg(\Delta \cap Q_k(\mathbf u)\bg) = \mu\bg(\Delta \cap Q_k(\mathbf u)\bg) + o(1)
    \] 
    as $x \to \infty$, where $Q_k(\mathbf u)$ is again the set of real sequences $(x_i)_{i \ge 1}$ with $x_i \ge u_i$ for all $1 \le i \le k$. In other words, the $k$ largest prime factors of a random integer uniformly distributed in $\NN \cap[1, x]$ converge in distribution to the first $k$ components of a Poisson--Dirichlet process of parameter $1$. Later, research has been done on understanding the speed of convergence in Billingsley's Theorem. In 1976, Knuth and Trabb Pardo \cite{knuthtrabbpardo1976} studied the case where we fix $u_1 = \cdots = u_{k-1} = 0$ and $u_k \in [0, 1]$ (corresponding to the distribution of the $k$\textsuperscript{th} largest prime factor). In 2000, Tenenbaum \cite{tenenbaum2000} gave the following asymptotic expansion for the error term in Billingsley's Theorem: There exists a sequence of functions $(\varphi_j)_{j \ge 1}$ such that
    \begin{equation}
    \label{eq:tenenbaum quant billingsley}
        \nu_x\bg(\Delta \cap Q_k(\mathbf u)\bg) = \mu\bg(\Delta \cap Q_k(\mathbf u)\bg) + \sum_{j=1}^m \frac{\varphi_j(\mathbf u)}{(\log x)^j} + O_{\mathbf u, m}\bggg(\frac{1}{(\log x)^{m+1}}\bggg)
    \end{equation}
    for all fixed $m$ and $\mathbf u \in [0, 1]^k$.\footnote{Actually, the dependence in $\mathbf u$ in the implicit constant of the error term was given in \cite{tenenbaum2000}. It is however more complicated to state in one line.}
    
    In 2002, Arratia went in different direction to quantify Billinglsey's Theorem. He proved in \cite{arratia02} that there exists a coupling between a random integer $N_x$ uniformly distributed in $\NN \cap [1, x]$, and a Poisson--Dirichlet process $\mathbf V = (V_1, V_2, \ldots)$ of parameter $1$ such that the $\ell^1$-norm $\|\primes(N_x, x) - \mathbf V\|$ is in expectation $O(\frac{\log_2 x}{\log x})$. In 2025, Koukoulopoulos and the author \cite{hk2025} modified his coupling to improve this expectation to $O(\frac{1}{\log x})$. Note that in these couplings, the law of $\primes(N_x, x)$ is $\nu_x$ and the law of $\mathbf V$ is $\mu$.
    
    In \cite{hk2025}, we actually gave the following stronger result: Let $(\Omega_{\star}, \mathcal F_{\star}, \pahk)$ be the coupling constructed in \cite{hk2025}. We factorize $N_x$ into its unique factorization $N_x = N_x^\flat N_x^\sharp$, with $N^\flat_x$ being squarefree, with $N_x^\sharp$ being squarefull and $\gcd(N_x^\flat, N_x^\sharp) = 1$. We also consider the deterministic function $r\colon \RR_{> 0} \to \RR_{> 0}$ defined in \cite[Equation (2.3)]{hk2025} (we only need to keep in mind for our discussion that it is a function satisfying the bound $r(t) \ll \min\{t, t^{-2}\}$ for all $t > 0$). As a direct consequence of \cite[Lemma 2.1 and Proposition 2.4]{hk2025}, we deduce 
    \begin{equation}
    \label{eq:AHK coupling bound}
        \pahk\bgg[\bg\| \primes(N_x, x) - \mathbf V\bg\| \ge \max\{S_x, T_x\} \bgg] \ll \frac{1}{\log x},
    \end{equation}
    with 
    \begin{equation}
        \label{eq:def of Sx and Tx}
        S_x \coloneqq \frac{5}{\log x}\log(x/N_x^\flat) \qquad \text{and} \qquad T_x \coloneqq \frac{5}{\log x}\sum_{i \ge 1} r(V_i \log x).
    \end{equation}
    By applying Proposition \ref{prop:fundamental ineq} with \eqref{eq:AHK coupling bound}, we directly obtain the following theorem:

    \begin{theorem}
    \label{thm:ineq under hyp}
        Let $x \ge 2$, $\eta \coloneqq \frac{1}{\log x}$. We have
        \begin{align*}
            \bgg|\nu_x(A) - \mu(A)\bgg| &\ll \pahk\bgg[\bg\{\primes(N_x, x),\ \mathbf V\bg\}\subseteq \partial_{\max\{S_x, T_x\}} A\bgg]+ \frac{1}{\log x},
        \end{align*}
        for any $A \in \mathcal B(\Delta)$.
    \end{theorem}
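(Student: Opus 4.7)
The strategy is essentially dictated by the preceding setup: marry the deterministic inequality of Proposition \ref{prop:fundamental ineq} with the probabilistic coupling bound \eqref{eq:AHK coupling bound}. First I would check the ambient hypothesis of the proposition. The natural choice of convex set is $\mathcal W = \Delta$, and convexity is easy: if $\mathbf x, \mathbf y \in \Delta$ and $\lambda \in [0,1]$, then $\lambda \mathbf x + (1-\lambda)\mathbf y$ inherits weakly decreasing nonnegative coordinates from $\mathbf x$ and $\mathbf y$, and the triangle inequality gives $\|\lambda \mathbf x + (1-\lambda)\mathbf y\| \le \lambda \|\mathbf x\| + (1-\lambda)\|\mathbf y\| \le 1$.

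Next, I would set up the three random variables required by Proposition \ref{prop:fundamental ineq} on the coupling space $(\Omega_\star, \mathcal F_\star, \pahk)$ from \cite{hk2025}. Take $X \coloneqq \primes(N_x, x)$, whose law under $\pahk$ is $\nu_x$, and $Y \coloneqq \mathbf V$, whose law under $\pahk$ is $\mu$; both take values in $\Delta$. For the nonnegative ``scale'' variable I would set $R \coloneqq \max\{S_x, T_x\}$, with $S_x, T_x$ as in \eqref{eq:def of Sx and Tx}. Applying Proposition \ref{prop:fundamental ineq} to this triple and to an arbitrary $A \in \mathcal B(\Delta)$ yields
\[
    |\nu_x(A) - \mu(A)| = \bg|\pahk[X \in A] - \pahk[Y \in A]\bg| \le \pahk\bg[\{X, Y\} \subseteq \partial_R A\bg] + \pahk\bg[\|X - Y\| \ge R\bg].
\]

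Finally, the second probability on the right is exactly the event controlled by \eqref{eq:AHK coupling bound}, which bounds it by $O(1/\log x)$. Substituting back produces precisely the claimed inequality. The proof is thus entirely mechanical: unpack the laws of $X$ and $Y$, quote Proposition \ref{prop:fundamental ineq}, and invoke the coupling estimate \eqref{eq:AHK coupling bound}. I do not foresee any genuine obstacle; the only small check is the convexity of $\Delta$ above, and the (already made) observation that all of $\primes(N_x, x), \mathbf V, S_x, T_x$ live on the single probability space $(\Omega_\star, \mathcal F_\star, \pahk)$ so that the three-variable hypothesis of the proposition is literally met.
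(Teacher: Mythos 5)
Your proof is correct and follows exactly the route the paper takes (the paper proves this theorem only implicitly, saying that it is obtained ``by applying Proposition~\ref{prop:fundamental ineq} with \eqref{eq:AHK coupling bound}''). Choosing $\mathcal W = \Delta$, $X = \primes(N_x, x)$, $Y = \mathbf V$, $R = \max\{S_x, T_x\}$ on the coupling space, then quoting Proposition~\ref{prop:fundamental ineq} and the tail bound \eqref{eq:AHK coupling bound}, is precisely the intended argument, and your side checks (convexity of $\Delta$, nonnegativity of $R$, the laws of $X$ and $Y$) are all accurate.
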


    As described in \cite[Lemmas 2.2 and 2.3]{hk2025}, the random variables $S_x$ and $T_x$ are both concentrated around $\frac{1}{\log x}$. Thus, we should expect the following from this theorem: 
    \begin{equation}
    \label{eq:heuristic for ineq under hypo}
        \text{RHS of Theorem \ref{thm:ineq under hyp}} \approx \min\bg\{\nu_x(\partial_\eta A), \mu(\partial_\eta A)\bg\} + \frac{1}{\log x}
    \end{equation}
    with $\eta = \frac{1}{\log x}$. This heuristic only becomes a problem if either $S_x$ or $T_x$ is positively correlated with the event that $\primes(N_x, x)$ and $\mathbf V$ are close to the boundary of $A$.

    However, if we do not have any clues on how to compute the probability on the right-hand side of Theorem \ref{thm:ineq under hyp}, then we can always use the following inequality
    \[
        \pahk\bgg[\|\primes(N_x, x) - \mathbf V\| \ge \eta\bgg] \ll_\epsilon x^{-\frac{\eta}{4+\epsilon}} + \frac{1}{\log x}.
    \]
    for any $\epsilon, \eta > 0$ and $x \ge 2$, which can be directly deduced from Chernoff's inequality with \cite[Propositions 2.4 and 2.5]{hk2025}. Therefore, we immediately arrive at the following theorem by applying again Proposition \ref{prop:fundamental ineq}. 

    \begin{theorem}
    \label{thm:ineq unconditional}
        Fix $\epsilon > 0$. For $x \ge 2$ and $\frac{1}{\log x} \le \eta \le \frac{(4+\epsilon)\log_2 x}{\log x}$, we have 
        \[
            \bgg|\nu_x(A) - \mu(A)\bgg| \ll_\epsilon \min\bgg\{\nu_x(\partial_\eta A),\ \mu(\partial_\eta A)\bgg\} + x^{-\frac{\eta}{4+\epsilon}},
        \]
        for any set $A \in \mathcal B(\Delta)$.
    \end{theorem}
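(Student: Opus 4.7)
The plan is to apply Proposition \ref{prop:fundamental ineq} directly on the coupling space $(\Omega_\star, \mathcal F_\star, \pahk)$ constructed in \cite{hk2025}. I take $\mathcal W \coloneqq \Delta$, which is a convex subset of $\ell^1(\RR)$ equipped with the $\ell^1$-norm; the two $\Delta$-valued random variables are $X \coloneqq \primes(N_x, x)$ and $Y \coloneqq \mathbf V$, whose laws under $\pahk$ are $\nu_x$ and $\mu$ respectively; and I let $R \coloneqq \eta$, viewed as a (deterministic) $\RR_{\ge 0}$-valued random variable. Proposition \ref{prop:fundamental ineq} then immediately gives
\[
    \bgg|\nu_x(A) - \mu(A)\bgg| \le \pahk\bgg[\{X, Y\} \subseteq \partial_\eta A\bgg] + \pahk\bgg[\|X - Y\| \ge \eta\bgg].
\]

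To handle the first summand, I would simply note that the event $\{X, Y\} \subseteq \partial_\eta A$ is contained in each of $\{X \in \partial_\eta A\}$ and $\{Y \in \partial_\eta A\}$, so its probability is bounded by $\min\{\nu_x(\partial_\eta A),\ \mu(\partial_\eta A)\}$, which is exactly the main term in the conclusion.

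For the second summand I would invoke the Chernoff-type tail bound
\[
    \pahk\bgg[\|\primes(N_x, x) - \mathbf V\| \ge \eta\bgg] \ll_\epsilon x^{-\eta/(4+\epsilon)} + \frac{1}{\log x}
\]
quoted in the text just before the theorem and deduced from Chernoff's inequality combined with \cite[Propositions 2.4 and 2.5]{hk2025}. The only remaining verification is that the auxiliary $1/\log x$ term is harmless in the stated range of $\eta$: the upper bound $\eta \le (4+\epsilon)\log_2 x/\log x$ gives $x^{-\eta/(4+\epsilon)} \ge e^{-\log_2 x} = 1/\log x$, so $1/\log x \ll x^{-\eta/(4+\epsilon)}$ throughout the range $\frac{1}{\log x} \le \eta \le \frac{(4+\epsilon)\log_2 x}{\log x}$, and the two terms can be absorbed into one.

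There is essentially no genuine obstacle: the only non-trivial probabilistic input — the tail bound for $\|\primes(N_x, x) - \mathbf V\|$ — is already supplied by the cited work, and the proof reduces to a two-line application of Proposition \ref{prop:fundamental ineq} followed by the trivial absorption at the top of the $\eta$-range. The only point worth double-checking is the convexity of $\Delta$ in $\ell^1(\RR)$, which is clear since the conditions $x_1 \ge x_2 \ge \cdots \ge 0$ and $\|\mathbf x\| \le 1$ are both preserved under convex combinations.
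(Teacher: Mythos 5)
Your proof is correct and follows exactly the paper's intended argument: the paper states that Theorem \ref{thm:ineq unconditional} follows ``immediately'' from Proposition \ref{prop:fundamental ineq} together with the displayed Chernoff bound, and you have simply spelled out the three deterministic substitutions ($\mathcal W = \Delta$, $R = \eta$, $X, Y$ the two coupled processes) plus the absorption of $1/\log x$ into $x^{-\eta/(4+\epsilon)}$ at the top of the $\eta$-range, all of which checks out.
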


    For most choices of set $A$, Theorem \ref{thm:ineq unconditional} leads to a weaker result than Theorem \ref{thm:ineq under hyp}, as discussed in \eqref{eq:heuristic for ineq under hypo}. However, the ratio between the bounds obtain in those two theorems should be relatively small, as we will see later in \eqref{eq:weaker H(x,y,z) estimate} when applying this theorem to the set $D(u, v)$. 

    \subsection{Reducing Poisson--Dirichlet approximation problems to two lemmas}

    All we need to get an appropriate bound using Theorem \ref{thm:ineq under hyp} are the following two lemmas:
    \begin{itemize}
        \item A \emph{Number Theory Boundary Lemma} (NTBL), which is an upper bound on the probability that either $\primes(N_x, x)$ or $\mathbf V$ is close to the boundary of $A$ by $S_x$ (this last random variable is a deterministic function of number theoretic random variables in the coupling, see \eqref{eq:def of Sx and Tx}). More precisely, it is a bound of the form 
        \begin{equation}
        \label{eq:ntbl def}
            \min\bgg\{\pahk\bg[\primes(N_x, x) \in \partial_{S_x} A\bg],\ \pahk\bg[\mathbf V \in \partial_{S_x} A\bg]\bgg\} \ll (\text{NTBL for the set $A$})
        \end{equation}
        for all $x \ge 2$.

        \item A \emph{Poisson--Dirichlet Boundary Lemma} (PDBL), which is an upper bound on the probability that either $\primes(N_x, x)$ or $\mathbf V$ is close to the boundary of $A$ by $T_x$ (this last random variable is a deterministic function of the Poisson--Dirichlet process in the coupling, see \eqref{eq:def of Sx and Tx} again). More precisely, it is a bound of the form 
        \begin{equation}
        \label{eq:pdbl def}
            \min\bgg\{\pahk\bg[\primes(N_x, x) \in \partial_{T_x} A\bg],\ \pahk\bg[\mathbf V \in \partial_{T_x} A\bg]\bgg\} \ll (\text{PDBL for the set $A$})
        \end{equation}
        for all $x\ge 2$.
    \end{itemize}
    \medskip
    
    If we do have a NTBL and a PDBL for some set $A$, then we directly deduce
    \begin{align}
    \begin{split}
    \label{eq:ntbl and pdbl combined}
        \pahk\bgg[\bg\{\primes(N_x, x),\ \mathbf V\bg\}\subseteq \partial_{\max\{S_x, T_x\}} A\bgg] \ll\ &(\text{NTBL for the set $A$})\ + \\
        &(\text{PDBL for the set $A$}).
        \end{split}
    \end{align}
    We note that if the bounds in \eqref{eq:ntbl def} and \eqref{eq:pdbl def} are sharp, then we should not expect a loss in the upper bound \eqref{eq:ntbl and pdbl combined} because we expect two random variables close to each other to be either both close to the boundary or both far from the boundary.

    The minima in \eqref{eq:ntbl def} and \eqref{eq:pdbl def} offer a choice of which quantity to bound to whoever tries to apply this strategy. In practice, we have one of three options depending on if we have only good number-theoretic information, only good Poisson--Dirichlet information, or both, about proximity to the boundary: Consider the interval $I \coloneqq [\frac{1}{\log x}, \frac{100\log_2 x}{\log x}]$.
    \begin{enumerate}[label=\arabic*), topsep=5pt,itemsep=5pt,partopsep=3pt, parsep=3pt]
        \item Suppose that we know how to compute both the bounds $\nu_x(\partial_\eta A)$ and $\mu(\partial_\eta A)$ for all $\eta \in I$. This is the ideal situation. We would then want to attempt to bound the probabilities $\pahk\bg[\primes(N_x, x)\in \partial_{S_x} A\bg]$ and $\pahk\bg[\mathbf V\in \partial_{T_x} A\bg]$. This is because the events $\{\primes(N_x, x)\in \partial_{S_x}\}$ belongs to the $\sigma$-algebra generated by the random variable $N_x$, and $\{\mathbf V\in \partial_{T_x}\}$ belongs to the $\sigma$-algebra generated by $\mathbf V$. Therefore, this approach has the benefit of completely avoiding going through the coupling to compute the desired probabilities.

        \item Suppose now that we have tools to bound $\nu_x(\partial_\eta A)$ for all $\eta \in I$, but we do not know how to compute bounds on $\mu(\partial_\eta A)$. Then we should aim at computing bounds on the probabilities $\pahk\bg[\primes(N_x, x)\in \partial_{S_x} A\bg]$ and $\pahk\bg[\primes(N_x, x)\in \partial_{T_x} A\bg]$. Note that we have no choice here to understand $\pahk\bg[\primes(N_x, x)\in \partial_{T_x} A\bg]$ within the structure of the coupling.

        \item Similarly, if we know how to bound $\mu(\partial_{\eta} A)$, but we have poor understanding of $\nu_x(\partial_\eta A)$ for all $\eta \in I$, then we should aim for $\pahk\bg[\mathbf V\in \partial_{S_x} A\bg]$ and $\pahk\bg[\mathbf V\in \partial_{T_x} A\bg]$. Again, the probability $\pahk\bg[\mathbf V\in \partial_{S_x} A\bg]$ will have to be studied within the coupling.
    \end{enumerate}

    \begin{remss}
        Here are a few remarks about Theorems \ref{thm:ineq under hyp} and \ref{thm:ineq unconditional} and the strategy presented above:
        \begin{enumerate}[label=\alph*), topsep=5pt,itemsep=5pt,partopsep=3pt, parsep=3pt]
            \item If it is not immediately clear what the boundary $\partial A$ is, we have available the identity 
            \begin{equation}
            \label{eq:identity dist to boundary}
                \bg\|\mathbf z - \partial A\bg\| = \max\bgg\{\bg\|\mathbf z-A\bg\|,\ \bg\|\mathbf z-\Delta\setminus A\bg\|\bgg\} \qquad \text{for all $\mathbf z \in \Delta$ and $A \subseteq \Delta$}
            \end{equation}
            to compute the measure of $\partial_\eta A$. This identity is true simply from the fact that $\Delta$ is a convex set. The proof is elementary and we leave it to the reader.

            \item For any integer $n \ge 2$, we could also study the sequence 
            \[
                \primes^*(n) \coloneqq \left(\frac{\log p_i(n)}{\log n}\right),
            \]
            instead of $\primes(n, x)$. Now this sequence is in the set $\Delta^*$ defined as all the sequences $\mathbf x \in \Delta$ satisfying $\|\mathbf x\|=1$. If we define the measure
            \[
                \nu_x^*(A) \coloneqq \frac{1}{\floor x-1}\cdot \#\bgg\{2 \le n \le x : \primes^*(n) \in A\bgg\},
            \]
            then we can obtain analogous versions of Theorems \ref{thm:ineq under hyp} and \ref{thm:ineq unconditional} simply from the fact that
            \[
                \bg\|\primes(n, x) - \primes^*(n)\bg\| = \frac{\log(x/n)}{\log x}
            \]
            for all $n \ge 2$.
            
            \item In \cite[Theorem 2]{hk2025}, Koukoulopoulos and the author also studied the convergence in law of random $k$-factorizations of a random integer to a Dirichlet law using the coupling $(\Omega_\star, \mathcal F_\star, \pahk)$. It is important to note that we cannot write this problem as $\nu_x(A)$ for some set $A$, and thus, we cannot apply directly Theorems \ref{thm:ineq under hyp} or \ref{thm:ineq unconditional}. 
            
            However, a very similar strategy as the one explained earlier was used for this problem: First, we extended the coupling $\Omega_\star$ to include the randomization coming from the $k$-factorization. Then, we introduce two random vectors in $\RR^k$ defined inside this extended coupling: the size of the random $k$-factorization $\bolddelta_{f, x}$, and a Dirichlet-distributed vector $\mathbf Z$. Our Lemma~9.1 in \cite{hk2025} proves that these vectors are close in $\RR^k$ to each other with high probability, and Lemma~9.2 reduces the problem into something resembling a NTBL and PDBL lemma. Finally, since we had good number-theoretic and Poisson--Dirichlet methods to study the boundary events, we did not need to dive into the coupling at all to prove these two boundary lemmas.
        \end{enumerate}
    \end{remss}

    \subsection{Reduction of Theorems \ref{theo:asymp for hxyz} and \ref{thm:explicit formula for h(u,v)} using Theorem \ref{thm:ineq under hyp}}


    We reduce Theorem \ref{theo:asymp for hxyz} and Theorem \ref{thm:explicit formula for h(u,v)} to a few key Main Lemmas using Theorem \ref{thm:ineq under hyp}. These Main Lemmas will be proven in later sections.

    Recall that we can factorize uniquely any positive integer $n$ into $n=n^\flat n^\sharp$ with $n^\flat$ being squarefree, $n^\sharp$ being squarefull and $\gcd(n^\flat, n^\sharp) = 1$. We define the function
        \begin{equation}
        \label{eq:def of G(x,y)}
            G(x, y) \coloneqq \#\bggg\{n \le x : \exists \, d|n \text{ satisfying } d \in \bgg(y(x/n^\flat)^{-5},\ y(x/n^\flat)^{5}\bgg)\bggg\}.
        \end{equation}
    The first Main Lemma shows that we only need to focus on the case $z \le \sqrt x$ in Theorem \ref{theo:asymp for hxyz} using the symmetry of the set of divisors of $n$.

    \begin{mainlemma}
    \label{lem:reduction to z^2<x}
        We have
        \[
            H(x, y, z) = x \cdot \nu_x\bggg(D\bgg(\tfrac{\log \ymin}{\log x}, \tfrac{\log \zmin}{\log x}\bgg)\bggg) + O\bgg(G(x, \ymin) + G(x, \zmin) + \sqrt x\bgg)
        \]
        for all $3 \le y < z \le x/3$ with $\ymin \coloneqq \min\{y, x/z\}$ and $\zmin \coloneqq \min\{z, x/y, \sqrt x\}$.
    \end{mainlemma}
    Using Theorem \ref{thm:ineq under hyp}, we have a bridge between the measure $\nu_x$ and $\mu$. For all $0 \le u < v \le 1$, we have
    \begin{align}
    \begin{split}
    \label{eq:bridge for D(u,v)}
        \nu_x\bg(D(u, v)\bg) = \mu\bg(D(u, v)\bg) + &O\bggg(\pahk\bg[\primes(N_x, x) \in \partial_{S_x} D(u, v)\bg] \\
        &+ \pahk\bg[\primes(N_x, x) \in \partial_{T_x} D(u, v)\bg] + \frac{1}{\log x}\bggg).
        \end{split}
    \end{align}
    We simplify the events described in the probabilities in the error term. If $\mathbf z \in \partial_\eta D(u, v)$ for some $\eta \ge 0$, then both $\|\mathbf z - \mathbf w\|$ and $\|\mathbf z - \mathbf y\|$ are $< \eta$ for some $\mathbf w \in D(u, v)$ and some $\mathbf y \notin D(u, v)$ using \eqref{eq:identity dist to boundary}. Thus, there must exist a subset $E\subseteq \NN$ satisfying $\phi_E(\mathbf w) \in (u, v)$ and $\phi_E(\mathbf y) \notin (u, v)$, where we recall $\phi_E$ being the function defined by $\phi_E(\mathbf x) \coloneqq \sum_{i \in E} x_i$. It follows from the fact that $\phi_E$ is $1$-Lipschitz that $\phi_E(\mathbf z) \in (u-\eta, u+\eta) \cup (v-\eta, v+\eta)$. Therefore, 
    \begin{equation}
        \label{eq:simplified boundary of D(u,v)}
        \partial_\eta D(u, v) \subseteq D(u-\eta, u+\eta) \cup D(v-\eta, v+\eta).
    \end{equation}
    With this inequality, we simplify the first probability in the error term of \eqref{eq:bridge for D(u,v)} as
    \begin{equation}
    \label{eq:simplify NTBL}
        \pahk\bgg[\primes(N_x, x) \in \partial_{S_x} D(u,v)\bgg] \ll \frac{G(x, x^{u}) + G(x, x^v)}{x},
    \end{equation}
    and we simplify the second probability as
    \begin{align}
        \begin{split}
        \label{eq:simplify PDBL}
            \pahk\bgg[\primes(N_x, x) \in \partial_{T_x} D(u,v)\bgg] &\ll \pahk\bgg[\exists\ d|N_x \text{ satisfying } d \in (x^{u-T_x},  x^{u+T_x})\bgg] \\& \ + \pahk\bgg[\exists\ d|N_x \text{ satisfying } d \in (x^{v-T_x}, x^{v+T_x})\bgg].
        \end{split}
    \end{align}
    This brings us to the following two boundary lemmas:
    \begin{mainlemma}[NTBL for the set $D(u, v)$]
        \label{lem:nt boundary lemma}
        Let $G(x, y)$ be the function defined in \eqref{eq:def of G(x,y)}. We have 
        \[
            G(x, y) \ll \frac{x}{(\log y)^\delta(\log_2 y)^{3/2}}
        \]
        for all $3 \le y \le \sqrt x$.
    \end{mainlemma}
    
    \begin{mainlemma}[PDBL for the set $D(u, v)$]
        \label{lem:pd boundary lemma}
        Let $(\Omega_\star, \mathcal F_\star, \pahk)$ be the coupling constructed in \cite{hk2025}, and let $T_x$ be defined as in \eqref{eq:def of Sx and Tx}. We have
        \[
            \pahk\bgg[\exists\ d|N_x \text{ satisfying } d \in (yx^{-T_x}, yx^{T_x})\bgg] \ll \frac{1}{(\log y)^\delta (\log_2 y)^{3/2}}
        \]
        for all $3 \le y \le \sqrt x$.
    \end{mainlemma}
    By combining Main Lemmas \ref{lem:reduction to z^2<x}, \ref{lem:nt boundary lemma} and \ref{lem:pd boundary lemma} as well as \eqref{eq:bridge for D(u,v)}, \eqref{eq:simplify NTBL} and \eqref{eq:simplify PDBL}, we proved
    \[
        H(x, y, z) = x \cdot \mu\bgg(D\bg(\tfrac{\log \ymin}{\log x}, \tfrac{\log \zmin}{\log x}\bg)\bgg) + O\bggg(\frac{x}{(\log \ymin)^\delta(\log_2 \ymin)^{3/2}}\bggg).
    \]
    For all $0 \le u < v \le 1$, we have $\Delta^* \cap D(u, v) = \Delta^* \cap D(\umin, \vmin)$, with $\umin \coloneqq \min\{u, 1-v\}$ and $\vmin\coloneqq \min\{v, 1-u, \frac{1}{2}\}$. Since the measure $\mu$ is supported on $\Delta^*$,  
    \[
        \mu\bg(D(u, v)\bg) = \mu\bg(D(\umin, \vmin)\bg).
    \]
    By definition of the function $h(u, v)$ in \eqref{eq:tenenbaum limit result}, 
    \begin{equation}
    \label{eq:h(u,v) equals mu}
        h(u, v) = \mu\bg(D(u, v)\bg).
    \end{equation}
    This proves Theorem \ref{theo:asymp for hxyz}. Finally, we directly reduce Theorem \ref{thm:explicit formula for h(u,v)} to the following lemma using \eqref{eq:density of PD} and \eqref{eq:h(u,v) equals mu}:
    \begin{mainlemma}
    \label{lem:polytopes}
        Let $0 \le u < v \le 1$, let $k = \floor{\frac{1}{v-u}}$, and let $\pi_k \colon \Delta \to \RR^k$ be the projection map defined as $\pi_k(\mathbf x) \coloneqq (x_1, \ldots, x_k)$. We also define $P_{u, v}(\mathcal E)$ as in Theorem \ref{thm:explicit formula for h(u,v)}. Then 
        \[
            \Delta^* \cap D(u, v)^c = \Delta^* \cap \pi_{k}^{-1}\bgggg(\bigsqcup_{\mathcal E \in 2^{2^{\NN \cap [1, k]}}} P_{u, v}(\mathcal E)\bgggg).
        \]
    \end{mainlemma}

    All that is left is to prove Main Lemmas \ref{lem:reduction to z^2<x}, \ref{lem:nt boundary lemma}, \ref{lem:pd boundary lemma} and \ref{lem:polytopes}. The proof of Main Lemma \ref{lem:pd boundary lemma} will be the longest part since we will have to dive into the structure of the coupling.

    \begin{remss}
        (a) As discussed earlier, we could apply Theorem \ref{thm:ineq unconditional} instead of Theorem \ref{thm:ineq under hyp} to completely avoid the boundary lemmas at the cost of arriving at a weaker version of Theorem \ref{theo:asymp for hxyz}. We use \eqref{eq:H(x,y,z) in prob language}, \eqref{eq:simplified boundary of D(u,v)}, \eqref{eq:h(u,v) equals mu} and Theorem \ref{thm:ineq unconditional} with $\epsilon = 1$ and $\eta = \frac{5}{\log x} (\delta \log_2 y + (\frac{3}{2} - \delta)\log_3 y)$ to directly obtain
        \begin{equation}
        \label{eq:weaker H(x,y,z) estimate}
            H(x, y, z) = x \cdot h(\tfrac{\log y}{\log z}, \tfrac{\log z}{\log x}) + O\bggg(\frac{x}{(\log y)^\delta(\log_2 y)^{3/2-\delta}}\bggg)
        \end{equation}
        for all $3 \le y \le z \le \sqrt x$.
        \medskip

        (b) In \cite[Corollary 1.1]{hk2025}, we showed that if $d^*(n)$ is the function $\min\{d|n : d \ge \sqrt n\}$, then there exists some $c \in (\frac{1}{2}, 1)$ such that
        \[
            \sum_{n \le x} \log d^*(n) = c \cdot x\log x + O(x).
        \]
        This constant $c$ was equal to $\EE_\mu[\Psi]$, with $\EE_\mu$ being the expectation under the measure $\mu$ and $\Psi$ being the random variable on $\Delta^*$ defined by 
        \[
            \Psi(\mathbf x) \coloneqq \inf \bg(\{\phi_E(\mathbf x) : E \subseteq \NN\} \cap [\tfrac{1}{2}, 1]\bg).
        \]
        Since $\Psi$ is positive, we have
        \[
            c = \EE_\mu[\Psi] = \int_0^\infty \mu(\Psi \ge u) \dee u. 
        \]
        By definition, we always have $\Psi \in [\frac{1}{2}, 1]$, and the event $\{\Psi \ge u\}$ is equal almost surely to the event $D(1-u, \frac{1}{2})^c$. With \eqref{eq:h(u,v) equals mu}, we have
        \[
            c = 1 - \int_0^{\frac{1}{2}} h(u, \tfrac{1}{2}) \dee u,
        \]
        which we can be estimated with any desired precision using Theorem \ref{thm:explicit formula for h(u,v)}.
    \end{remss}

\section{Short proofs of Main Lemmas \ref{lem:reduction to z^2<x}, \ref{lem:nt boundary lemma} \& \ref{lem:polytopes} }
\label{sec:short proofs}

The only thing in common between the proofs of Main Lemmas \ref{lem:reduction to z^2<x}, \ref{lem:nt boundary lemma} and \ref{lem:polytopes} is the fact that they can be done quickly. That is why we collect them in this section.

\begin{proof}[Proof of Main Lemma \ref{lem:reduction to z^2<x}]
    The function $H(x, y, z)$ counts the number of integers $n \le x$ which have a divisor $d|n$ in $d \in (y, z)$. By the symmetry of the set of divisors, any $n$ has a divisor in $(y, z)$ if and only if it also has one in the open interval
    \[
        \bgg(\min\{y, n/z\}, \min\{z, n/y, \sqrt n\}\bgg),
    \]
    unless $n$ is a perfect square. Thus, we have 
    \[
        |H(x, y, z) - H(x, \ymin, \zmin)| \le S_1+S_2+S_3+1,
    \]
    with $S_1$ being the number of integers $n < x$ with a divisor $d|n$ satisfying
    \[
        d \in \bgg(\min\{y, n/z\}, \min\{y, x/z\}\bgg] \subseteq \bgg(\ymin \cdot (x/n^\flat)^{-5}, \ymin \cdot (x/n^\flat)^{5}\bgg),
    \]
    with $S_2$ being the number of integers $n < x$ with a divisor $d|n$ satisfying
    \[
        d \in \bgg(\min\{z, n/y,\sqrt n\}, \min\{z, x/y, \sqrt x\}\bgg] \subseteq \bgg(\zmin \cdot (x/n^\flat)^{-5}, \zmin \cdot (x/n^\flat)^{5}\bgg),
    \]
    and $S_3$ being the number of perfect squares that are $\le x$. We have $S_1 \le F(x, \ymin)$, $S_2 \le F(x, \zmin)$, and $S_3 \le \sqrt x$. We conclude the proof by using the fact that 
    \[
        H(x, \ymin, \zmin) = x \cdot \nu_x\bgg(D\bg(\tfrac{\log \ymin}{\log x}, \tfrac{\log \zmin}{\log x}\bg)\bgg) + O(1),
    \]
    as we saw in \eqref{eq:H(x,y,z) in prob language}.
\end{proof}

\begin{proof}[Proof of Main Lemma \ref{lem:nt boundary lemma}]
Let $3 \le y \le x$ and positive integers $b, j$, with $b$ being squarefull. We consider the sets 
\begin{align*}
    \mathcal G(x, y) &\coloneqq \{n \in \NN \cap [1, x] : \exists\ d|n \text{ satisfying } d \in (y(x/n^\flat)^{-5}, y(x/n^\flat)^5)\}, \\
    \mathcal K(b, j, x) &\coloneqq \{n \in \NN \cap [1, x] : n^\sharp = b \text{ and } n^\flat \in (\tfrac{x}{2^{j}b}, \tfrac{x}{2^{j-1}b}]\}.
\end{align*}
The cardinality of the set $\mathcal G(x, y)$ is $G(x, y)$. Since $\# \mathcal K(b, j, x) \le \frac{x}{2^{j-1}b}$, we have 
\[
    \underset{\substack{b, j \ge 1 :\ b \text{ squarefull} \\ 2^jb > \log y}}{\sum\sum} \#\mathcal K(b, j, x)  \le \frac{2x}{(\log y)^{1/3}} \sum_{\substack{b \ge 1 \\ b \text{ squarefull}}} \frac{1}{b^{2/3}} \sum_{j \ge 1} \frac{1}{2^{2j/3}}.
\]
The two series on the right-hand side are convergent, hence
\begin{equation}
    \label{eq:G(y,x) decomposition}
    G(x, y) \ll \underset{\substack{b, j \ge 1 :\ b \text{ squarefull} \\ 2^jb \le \log y}}{\sum\sum} \#\bg(\mathcal G(x, y) \cap \mathcal K(b, j, x)\bg) + \frac{x}{(\log y)^{1/3}}.
\end{equation}
Consider the map $\psi \colon \mathcal G(x, y) \cap \mathcal K(b, j, x) \to \NN$ defined by $\psi(n) = n^\flat$. Since every element of the domain have the same squarefull part, this function is injective. Furthermore, for every integer $n \in \mathcal G(x, y) \cap \mathcal K(b, j, x)$, there is a divisor $d|n$ in the interval $d \in (y(2^jb)^{-5}, y(2^jb)^5)$, so we have
\[
    \gcd(d, \psi(n)) = \frac{d}{\gcd(d, b)} \in (y(2^jb)^{-6}, y(2^jb)^5).
\]
Therefore, any $m$ in the image $\psi\bg(\mathcal G(x, y) \cap \mathcal K(b, j, x)\bg)$ must satisfy $m \in \NN \cap [1, \frac{x}{2^{j-1}b}]$ and has a divisor in $(y(2^jb)^{-6}, y(2^jb)^5)$. Since $\psi$ is injective, we must have
\begin{align*}
    \#\bg(\mathcal G(x, y) \cap \mathcal K(b, j, x)\bg) &= \#\psi\bg(\mathcal G(x, y) \cap \mathcal K(b, j, x)\bg) \\
    &\le H\bgg(\tfrac{x}{2^{j-1}b},\ y(2^jb)^{-6},\ y(2^jb)^{5}\bgg) \\
    &\ll \frac{(j+\log b)^\delta}{2^jb} \cdot \frac{x}{(\log y)^\delta(\log_2 y)^{3/2}}
\end{align*}
whenever $2^jb \le \log y$ by using \eqref{eq:ford result} in the last inequality. We insert this bound in \eqref{eq:G(y,x) decomposition}, and Main Lemma~\ref{lem:nt boundary lemma} immediately follows.
\end{proof}

\begin{proof}[Proof of Main Lemma \ref{lem:polytopes}]
We first need to show that for any $E \subseteq \NN$, we have
\begin{equation}
\label{eq:removing tail}
    D(u, v)^c \cap \phi_E^{-1}(\RR_{\ge v}) = D(u, v)^c  \cap \phi_{E \cap [1, k]}^{-1}(\RR_{\ge v}).
\end{equation}
The proof of $\supseteq$ is trivial, so we will only show the inclusion $\subseteq$.
Let $\mathbf x \in D(u, v)^c \cap \phi_E^{-1}(\RR_{\ge v})$ for any set $E \subseteq \NN$. We construct a sequence $(\gamma_j)_{j \ge 0}$ by defining $\gamma_0 \coloneqq 0$ and $\gamma_j \coloneqq \phi_{E \cap [1, j]}(\mathbf x)$ for all $j \ge 1$. Since the sequence is non-decreasing, $\lim_{j \to \infty} \gamma_j \ge v$ and the set $\{\gamma_j : j \ge 0\}$ is disjoint from the interval $(u, v)$, then there must exist a unique index $j^*$ satisfying $\gamma_{j^*-1} \le u < v \le \gamma_{j^*}$. It follows that
\[
    x_{j^*} = \gamma_{j^*} - \gamma_{j^*-1} \ge v-u > \frac{1}{k+1} \ge \frac{1}{k+1} \sum_{i=1}^{k+1} x_i \ge x_{k+1}.
\]
We proved that $x_{j^*} > x_{k+1}$, hence $j^* \le k$ and therefore $\mathbf x \in \phi_{E \cap [1, k]}^{-1}(\RR_{\ge v})$. We conclude that \eqref{eq:removing tail} is true. 

Second, for any $E \subseteq \NN$, we have
\begin{equation}
\label{eq:complement hyperplane}
    \Delta^* \cap \phi_E^{-1}\bg((u, v)^c\bg) = \Delta^* \cap \bggg(\phi_E^{-1}\bg(\RR_{\ge v}\bg) \sqcup \phi_{E^c}^{-1}\bg(\RR_{\ge 1-u}\bg)\bggg).
\end{equation}
Combining \eqref{eq:D(u,v) union def}, \eqref{eq:removing tail} and \eqref{eq:complement hyperplane}, we get
\begin{align*}
    \Delta^* \cap D(u, v)^c &= \bigcap_{E \subseteq \NN} \bggg(\Delta^* \cap D(u, v)^c \cap \phi_E^{-1}((u, v)^c)\bggg) \\
    &=\Delta^* \cap \bigcap_{E \subseteq \NN\cap [1, k]} \bgg(\phi_{E}^{-1}\bg(\RR_{\ge v}\bg) \sqcup  \phi_{E^c}^{-1}\bg(\RR_{\ge 1-u}\bg)\bgg).
\end{align*}
Using distributivity laws of the intersection and properties of the preimage, we prove the lemma.
\end{proof}

\section{The coupling}
\label{sec:coupling}

The proof of Main Lemma \ref{lem:pd boundary lemma} require for us to understand further the structure of the coupling in \cite{hk2025}. 

Here is one way to construct this coupling. Let $(\Omega_\star, \mathcal F_\star, \pahk)$ be any ambient probability space containing the following random objects:
\begin{itemize}
    \item A Poisson point process $\RS$ in $\RR_{>0} \times \RR_{>0}$ with intensity $e^{-wy} \dee w \dee y$.\footnote{The coupling in \cite{hk2025} was actually defined as any ambient space with a GEM process and three i.i.d uniform random variables that are also independent of the GEM. As we will see later in Proposition \ref{prop:dist of V}, the GEM process can be extracted from the Poisson process $\RS$.}

    \item Three i.i.d. random variables $U_1, U_2, U_3$ that are uniformly distributed in $(0, 1)$, and which are also independent of $\RS$.
\end{itemize}
From these random objects, we explain how to extract a random integer $N_x$ and Poisson--Dirichlet process $\mathbf V$. All the following random variables were introduced in \cite{arratia02} or in \cite{hk2025}, except $M_x^*$, $\thetasmall$ and $\thetalarge$. We will write $\eahk$ as the expectation inside this coupling.

\begin{deff}[$x$-labelling of $\RS$]
    \label{deff:x-label of R}
    Almost surely, we may find a unique sequence of random points~$(W_i, Y_i)_{i \in \ZZ}$ satisfying
    \begin{itemize}
        \item $\RS = \bg\{(W_i, Y_i) : i \in \ZZ\bg\}$;
        \item $W_i < W_{i+1}$ for all $i\in\ZZ$;
        \item if we let $S_i\coloneqq \sum_{\ell \ge i} Y_\ell$ for all $i\in\ZZ$, then we have $S_1 \le \log x < S_0$.
    \end{itemize}
    We refer to this sequence as the \emph{$x$-labelling} of the points of $\RS$.
\end{deff}

\begin{deff}[The processes $\mathbf L$ and $\mathbf V$]
    \label{deff:process}
    Let $x \ge 2$, let $(W_i, Y_i)_{i \in \ZZ}$ be the $x$-labelling of $\RS$, and let $S_i \coloneqq \sum_{\ell \ge i} Y_\ell$ for all $i \in \ZZ$. We define the process $\mathbf L = (L_1, L_2, \ldots)$ by $L_1 \coloneqq 1-\frac{S_1}{\log x}$, and $L_i \coloneqq \frac{Y_{i-1}}{\log x}$ for all $i \ge 2$. We also define the process $\mathbf V$ as the rearrangement of the terms of the process $\mathbf L$ in non-increasing order.
\end{deff}

\begin{deff}[The deterministic functions $h$ and $r$]
    \label{deff:functions h and r}
    Let $(\lambda_j)_{j \ge 0}$ be the increasing sequence of positive real numbers defined by $\lambda_0 \coloneqq e^{-\gamma}$ and 
    \[
        \lambda_j \coloneqq \exp\bggg(-\gamma + \sum_{1 \le i \le j} \frac{1}{v_iq_i}\bggg) \text{ for $j \ge 1$,}
    \]
    with $\gamma$ being the Euler-Mascheroni constant and $q_i = p_i^{v_i}$ being the $i$\textsuperscript{th} smallest prime power, i.e., $(q_i)_{i \ge 1}$ is the sequence $2, 3, 2^2, 5, 7, 2^3, 3^2, \ldots$ We define the step-function $h \colon \RR_{>0} \to \RR_{>0}$ by
    \[
        h(t) \coloneqq \sum_{j \ge 1} (\log q_j)\one_{\lambda_{j-1} < t \le \lambda_{j}},
    \]
    and we define the function $r \colon \RR_{>0} \to \RR_{>0}$ as 
    \[
        r(t) \coloneqq |h(t) - t|.
    \]
\end{deff}

\begin{deff}[The Poisson process $\RS^*$ and its $x$-labelling]
    \label{deff:PP RS* and label}
    Let $h$ be the deterministic function from Definition \ref{deff:functions h and r}. Consider the map $\psi \colon \RR_{>0} \times \RR_{>e^{-\gamma}} \to \RR_{>0} \times \mathcal Q$, with $\mathcal Q$ being the set of prime powers, and 
    \[
        \psi(w, y) \coloneqq \bgg(\frac{wy}{h(y)}, e^{h(y)}\bgg).
    \]
    Define the Poisson process $\RS^* \coloneqq \bg\{\psi(W, Y) : (W, Y) \in \RS \text{ and } Y > e^{-\gamma}\bg\}$. The \emph{$x$-labelling} of $\RS^*$ is the almost surely unique sequence of random points~$(T^*_i, Q_i^*)_{i \in \ZZ_{\le K}}$ along with the random integer $K$ satisfying
		\begin{itemize}
			\item $\RS^* = \bg\{(T_i^*, Q_i^*) : i \in \ZZ_{\le K}\bg\}$;
			\item $T_i < T_{i+1}$ for all $i\in\ZZ_{\le K}$;
			\item $\prod_{i=1}^K  Q_i^*\le x< \prod_{i=0}^K Q_i^*$.
		\end{itemize}
\end{deff}

\begin{deff}[The random integer $M_x$]
    \label{deff:def of M}
    Let $(W_i, Y_i)_{i \in \ZZ}$ be the $x$-labelling of $\RS$ (see Definition~\ref{deff:x-label of R}). We define $J_x \coloneqq \prod_{i \ge 1} e^{h(Y_i)}$. We also define the \emph{extra prime} $\pextra$ to be the smallest element of $\{1\} \cup \{\text{primes}\}$ that would satisfy $\theta(\pextra) \ge U_1\theta(x/J_x)$, where $\theta(y) = \sum_{p \le y} \log p$ is Chebyshev's function. Finally, we define the random integer $M_x \coloneqq J_x\pextra$.
\end{deff}

\begin{deff}[The random integer $M_x^*$]
    \label{deff:def of M*}
    Let $(T_i^*, Q_i^*)_{i \in \ZZ_{\le K}}$ be the $x$-labelling of $\RS^*$ (see Definition~\ref{deff:PP RS* and label}). We define $J^*_x \coloneqq \prod_{i \ge 1} Q^*_i$. We also define the \emph{extra prime} $\pextra^*$ to be the smallest element of $\{1\} \cup \{\text{primes}\}$ that would satisfy $\theta(\pextra^*) \ge U_1\theta(x/J_x^*)$, where $\theta(y) = \sum_{p \le y} \log p$ is Chebyshev's function. Finally, we define the random integer $M^*_x \coloneqq J_x^*\pextra^*$.
\end{deff}

\begin{deff}[The random integer $N_x$]
    \label{deff:def of N}
    For any two probability measures $\mu_1$ and $\mu_2$ supported on $\NN$, we define a sequence $(z_j)_{j \ge 0}$ with $z_0 \coloneqq 0$ and 
    \[
        z_j \coloneqq \sum_{1 \le i \le j} \frac{(\mu_1(i) - \mu_2(i))^-}{\dtv(\mu_1, \mu_2)},
    \]
    with $\dtv(\mu_1, \mu_2) \coloneqq \sup_{A \subseteq \NN} |\mu_1(A) - \mu_2(A)|$ being the total variation distance between $\mu_1$ and $\mu_2$. Let $f_{\mu_1, \mu_2} \colon \NN \times (0, 1)^2 \to \NN$ be defined as 
    \[
        f_{\mu_1, \mu_2}(m, a, b) = \begin{cases}
            m &\text{if $a \cdot \mu_1(m) \le \mu_2(m)$,} \\
            \sum_{i \ge 1} i \cdot \one_{z_{i-1} < b \le z_i} &\text{otherwise.}
        \end{cases}
    \]
    We define the random integer 
    \[
        N_x \coloneqq f_{\mu_1, \mu_2}(M_x, U_2, U_3)
    \]
    with $\mu_1$ being the law of $M_x$ and $\mu_2$ being the uniform distribution on $\NN \cap[1, x]$.
\end{deff}

\begin{deff}[The random variables $\thetasmall$ and $\thetalarge$]
    \label{deff:def of Thetas}
    Let $r$ be the function from Definition \ref{deff:functions h and r}. We define $r_0 \coloneqq \sup_{t > 0} r(t)$ and 
    \[
        \thetasmall \coloneqq r_0 + \sum_{\substack{(W, Y) \in \RS \\ Y \le e^{-\gamma}}} r(Y).
    \]
    Let $(W_i, Y_i)_{i \in \ZZ}$ be the $x$-labelling of $\RS$ (see Definition \ref{deff:x-label of R}). We also define $\thetalarge$ as the random variable 
    \[
        \thetalarge \coloneqq \sum_{\substack{i \ge 1 \\ Y_i > e^{-\gamma}}} r(Y_i).
    \]
\end{deff}

\bigskip
Here are some properties that these random objects satisfy:

\begin{prop}[Distribution of $N_x$]
    The random integer $N_x$ is uniformly distributed in $\NN \cap [1, x]$.
\end{prop}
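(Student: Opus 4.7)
The plan is to verify directly that $\PP[N_x = n] = \mu_2(n)$ for every $n \in \NN$ by splitting according to which branch of the piecewise definition of $f_{\mu_1, \mu_2}$ is selected. This is essentially the classical maximal coupling construction, and all the work consists in checking that the two branches contribute exactly the pieces $\min(\mu_1(n),\mu_2(n))$ and $(\mu_2(n)-\mu_1(n))^+$ whose sum is $\mu_2(n)$.

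First I would set up the decomposition. Let $A \coloneqq \{U_2\cdot \mu_1(M_x) \le \mu_2(M_x)\}$, so that on $A$ we have $N_x = M_x$, and on $A^c$ we have $N_x = \sum_{i\ge 1} i \cdot \one_{z_{i-1} < U_3 \le z_i}$. Then for any fixed $n \in \NN$,
\[
\pahk[N_x = n] = \pahk[M_x = n,\ A] + \pahk[A^c] \cdot \pahk[z_{n-1} < U_3 \le z_n],
\]
using here that $U_3$ is independent of the pair $(M_x, U_2)$ (hence of $A$ and of $A^c$) and uniform on $(0,1)$.

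Next I would compute each piece. Since $U_2$ is independent of $M_x$ and uniform on $(0,1)$, conditioning on $M_x = n$ yields
\[
\pahk[M_x = n,\ A] = \mu_1(n)\cdot \min\bggg\{1, \tfrac{\mu_2(n)}{\mu_1(n)}\bggg\} = \min\{\mu_1(n),\mu_2(n)\},
\]
with the convention that the term is $0$ when $\mu_1(n)=0$. Summing this identity over $n$ and subtracting from $1$ gives
\[
\pahk[A^c] = 1 - \sum_{m\ge 1} \min\{\mu_1(m),\mu_2(m)\} = \sum_{m\ge 1}(\mu_1(m)-\mu_2(m))^+ = \dtv(\mu_1,\mu_2).
\]
Since $(z_j)_{j\ge 0}$ is non-decreasing with $z_\infty = \sum_j (\mu_1(j)-\mu_2(j))^-/\dtv(\mu_1,\mu_2) = 1$, the uniform variable $U_3$ lands in $(z_{n-1}, z_n]$ with probability exactly $(\mu_2(n)-\mu_1(n))^+/\dtv(\mu_1,\mu_2)$. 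Multiplying by $\pahk[A^c]$ cancels the denominator and yields the second piece $(\mu_2(n)-\mu_1(n))^+$.

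Adding the two contributions gives $\pahk[N_x=n] = \min\{\mu_1(n),\mu_2(n)\} + (\mu_2(n)-\mu_1(n))^+ = \mu_2(n)$, which is $\frac{1}{\floor x}$ if $1 \le n \le x$ and $0$ otherwise. The only mildly delicate point is the independence structure of $(M_x, U_2, U_3)$: by Definition \ref{deff:def of M}, $M_x$ is a functional of $\RS$ and $U_1$, so it is independent of $(U_2, U_3)$ by construction of the ambient space $(\Omega_\star,\mathcal F_\star,\pahk)$, and this justifies conditioning on $M_x$ in the computations above. No other obstacle arises.
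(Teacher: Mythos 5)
Your computation is correct and is the standard maximal-coupling verification: on the event $A = \{U_2\mu_1(M_x)\le\mu_2(M_x)\}$ the branch $N_x = M_x$ contributes $\min\{\mu_1(n),\mu_2(n)\}$, and on $A^c$ the independent uniform $U_3$ falling in $(z_{n-1},z_n]$ contributes $(\mu_2(n)-\mu_1(n))^+$ after the factor $\pahk[A^c]=\dtv(\mu_1,\mu_2)$ cancels the normalization in $z_j$; the sum is $\mu_2(n)=\one_{1\le n\le x}/\floor{x}$. However, you are not taking a different mathematical route from the paper so much as supplying the details the paper delegates: the paper's own proof is a one-line citation to \cite[Section~3.8]{arratia02} and \cite[Lemma~B.2]{hk2025}, both of which establish precisely the fact you verify, namely that $f_{\mu_1,\mu_2}(X,U,U')$ has law $\mu_2$ when $X\sim\mu_1$ and $U,U'$ are uniforms independent of each other and of $X$. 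What your write-up buys is self-containedness; what the paper's citation buys is brevity. One minor point worth a remark in a fully rigorous account: if $\dtv(\mu_1,\mu_2)=0$ the sequence $(z_j)$ in Definition~\ref{deff:def of N} is given by a $0/0$ expression, but in that case $\mu_1=\mu_2$ forces $\pahk[A]=1$, so $A^c$ is null and the first piece $\min\{\mu_1(n),\mu_2(n)\}=\mu_2(n)$ alone gives the conclusion; your argument silently assumes $\dtv(\mu_1,\mu_2)>0$, which is harmless but should be noted.
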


\begin{proof}
    In general, if $X$ is any random variable with law $\mu_1$ and $U, U'$ are two uniform random variables independent of each other and of $X$, then it was shown in \cite[Section 3.8]{arratia02} that the law of the random variable $f_{\mu_1, \mu_2}(X, U, U')$ is $\mu_2$, with $f_{\mu_1, \mu_2}$ being defined as in Definition \ref{deff:def of N}. This was also reproven in \cite[Lemma B.2]{hk2025} using the notation of Definition \ref{deff:def of N}.
\end{proof}

\begin{prop}[Distribution of $\mathbf V$]
\label{prop:dist of V}
    The process $\mathbf V$ follows a Poisson--Dirichlet distirbution.
\end{prop}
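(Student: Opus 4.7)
The plan is to show that $\mathbf L$ follows the GEM(1) distribution, from which the conclusion $\mathbf V \sim \mathrm{PD}(1)$ will follow by the definition of $\mathrm{PD}(1)$ as the law of the non-increasing rearrangement of a GEM(1) process; see e.g.\ \cite[Chapter 2]{feng2010}.

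First I would recall that a sequence of nonnegative random variables $(L_i)_{i \ge 1}$ summing to~$1$ has the GEM(1) distribution if and only if its residuals $R_k \coloneqq \sum_{j > k} L_j$ satisfy $R_0 = 1$ and the ratios $(R_k/R_{k-1})_{k \ge 1}$ are iid $\mathrm{Unif}(0, 1)$, or equivalently that the set $\{R_k : k \ge 1\}$ coincides with the decreasing listing of a Poisson point process on $(0, 1]$ with intensity $dt/t$. For our $\mathbf L$, a direct computation using Definition~\ref{deff:process} gives $R_k = S_k/\log x$ for $k \ge 1$, so the task reduces to showing that $\{S_k/\log x\}_{k \ge 1}$ is a Poisson process on $(0, 1]$ with intensity $dt/t$.

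I would then establish the stronger statement that $\{S_i : i \in \ZZ\}$ is itself a Poisson point process on $(0, \infty)$ with intensity $ds/s$. Once proved, the $x$-labelling condition $S_0 > \log x \ge S_1 > S_2 > \cdots$ identifies the set $\{S_i : i \ge 1\}$ with $\{S_i : S_i \le \log x\}$, which is the restriction of the Poisson process to $(0, \log x]$, and rescaling by $1/\log x$ together with the scale-invariance of $ds/s$ yields the desired Poisson process on $(0, 1]$.

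The main obstacle is proving this stronger claim. My approach is to exploit the connection between $\RS$ and the gamma subordinator: for each fixed $w > 0$, projecting $\RS \cap ((w, \infty) \times (0, \infty))$ onto its $Y$-coordinate gives a Poisson process on $(0, \infty)$ with intensity $\int_w^\infty e^{-w' y}\, dw' \cdot dy = (e^{-w y}/y)\, dy$, which is precisely the L\'evy measure of a gamma subordinator at rate $w$ up to time~$1$. Hence $S(w) \coloneqq \sum_{W_i > w} Y_i \sim \mathrm{Exp}(w)$, and by Kingman's classical representation of $\mathrm{PD}(1)$ as the normalized ranked jumps of a gamma subordinator, the sequence $\{Y_i/S(w) : W_i > w\}$ listed in decreasing order has the $\mathrm{PD}(1)$ distribution, independent of $S(w)$. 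To promote this deterministic-$w$ information to the joint distribution of $\{S_i\}_{i \in \ZZ}$, I would compute the Laplace functional $\eahk[\exp(-\sum_i g(S_i))]$ and verify that it equals $\exp\bg(-\int_0^\infty (1 - e^{-g(s)})\, ds/s\bg)$, using the scale-invariance of $\RS$ under the map $(w, y) \mapsto (\lambda w, y/\lambda)$ to organize the computation.
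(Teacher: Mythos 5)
Your overall structure matches the paper's: show that $\mathbf L \sim \mathrm{GEM}(1)$, then invoke the standard fact (which both you and the paper cite to Feng \cite{feng2010}) that the non-increasing rearrangement of a GEM process is Poisson--Dirichlet. Where you diverge is in the first step: the paper disposes of it in one line by citing \cite[Proposition~4.3]{hk2025}, whereas you set out to derive it from the definition of $\RS$. Your reduction is sound — the residual identity $R_k = S_k/\log x$ for $k\ge 1$ (and $R_0 = 1$) follows immediately from Definition~\ref{deff:process}, the characterization of GEM(1) via its residuals forming a scale-invariant Poisson process on $(0,1]$ is correct, and using $S_1 \le \log x < S_0$ to identify $\{S_i : i \ge 1\}$ with the restriction to $(0,\log x]$ is exactly right. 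The load-bearing claim is then that $\{S_i : i \in \ZZ\}$ is a Poisson process on $(0,\infty)$ with intensity $\dee s/s$; that claim is true (its mean measure and avoidance probabilities can be checked with the Mecke formula), but it is precisely the substance of the cited \cite[Proposition~4.3]{hk2025}, and your sketch of the Laplace-functional verification understates the difficulty: each $S_i$ is a tail sum over the whole configuration rather than a function of a single point of $\RS$, so Campbell's theorem does not apply directly and one needs a Mecke/compensation argument or the regenerative structure of the associated gamma subordinator. In short, nothing you wrote is wrong, but the step you leave as a plan to ``verify'' is the one nontrivial step, and it is the step the paper outsources to a reference; your approach is a more self-contained re-derivation of that proposition rather than a genuinely shorter or simpler route.
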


\begin{proof}
    The process $\mathbf L$ from Definiton~\ref{deff:process} follows a GEM distribution (see \cite[Proposition~4.3]{hk2025}). If we rearrange the terms of any GEM process in non-increasing order, then we always obtain a Poisson--Dirichlet process (see \cite[Theorem~2.7]{feng2010}).
\end{proof}

\begin{prop}[Total variation distance between $N_x, M_x$ and $M_x^*$]
    \label{prop:dtv of N M M*}
    For all $x \ge 2$, let $\mathcal E(x) \coloneqq \{N_x = M_x = M_x^*\}$. We have 
    \[
        \pahk\bg[\mathcal E(x)^c\bg] \ll \frac{1}{\log x}.
    \]
\end{prop}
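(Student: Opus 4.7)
The plan is to use the union bound
\[
  \pahk[\mathcal{E}(x)^c] \le \pahk[N_x \neq M_x] + \pahk[M_x \neq M_x^*]
\]
and to handle each term by $O(1/\log x)$ separately.

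For $\pahk[N_x \neq M_x]$: Inspecting Definition~\ref{deff:def of N}, the event $\{N_x \neq M_x\}$ coincides with the event that the second branch of $f_{\mu_1,\mu_2}(M_x, U_2, U_3)$ is invoked, where $\mu_1$ is the law of $M_x$ and $\mu_2$ is the uniform measure on $\NN \cap [1,x]$. By the construction of $f_{\mu_1,\mu_2}$, which is reproven in \cite[Lemma~B.2]{hk2025}, this probability is exactly $\dtv(\mu_1, \mu_2)$. It thus suffices to invoke the quantitative near-uniformity of $M_x$ established in \cite{hk2025}, which gives $\dtv(\mu_1, \mu_2) \ll 1/\log x$, and cite it directly.

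For $\pahk[M_x \neq M_x^*]$: The reduction I would make is to $\pahk[J_x \neq J_x^*] \ll 1/\log x$, since once $J_x = J_x^*$ both extra primes $\pextra$ and $\pextra^*$ are produced from $U_1$ and $\theta(x/J)$ by the same deterministic rule, forcing $M_x = M_x^*$. Both $J_x$ and $J_x^*$ have the form $\prod e^{h(Y)}$ over subsets of the same Poisson process $\RS$ restricted to $Y > e^{-\gamma}$; only the selection rule differs, namely ordering by $W$ versus ordering by $T = WY/h(Y)$, and cutoff $\sum_{i \ge 1} Y_i \le \log x$ versus $\sum_{j=1}^K h(Y_{\sigma(j)}) \le \log x$. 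Since $|Y - h(Y)| = r(Y)$ is small and $\sum_i r(Y_i) = \thetasmall + \thetalarge$ is concentrated around a constant by \cite[Lemmas~2.2 and~2.3]{hk2025}, the two orderings should agree and the two cutoffs should pick out the same set of points of $\RS$ with high probability.

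The main obstacle will be quantifying the remaining boundary discrepancies: the probability that a point of $\RS$ lies in the window near the cutoff where the $W$-order and the $T$-order differ, or where the two sum cutoffs straddle distinct points of $\RS$. I expect both events to be controlled by standard Poisson-intensity estimates near the cutoff combined with the concentration of $\thetasmall + \thetalarge$, each contributing probability $O(1/\log x)$, which closes the argument.
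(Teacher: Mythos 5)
Your decomposition matches the paper's: the paper observes that $\{N_x = M_x\} \cap \{J_x = J_x^*\}$ forces $\mathcal E(x)$ (exactly the reduction you make, since equal $J$'s give equal extra primes), so $\pahk[\mathcal E(x)^c] \le \pahk[N_x \ne M_x] + \pahk[J_x \ne J_x^*]$, and then both terms are bounded by citing \cite[Propositions 2.4 and 6.5]{hk2025}. Your handling of the first term — identifying $\pahk[N_x \ne M_x]$ with $\dtv(\mathcal L(M_x), \mathrm{Unif})$ and citing the near-uniformity of $M_x$ — is precisely what \cite[Proposition 2.4]{hk2025} supplies. Where you diverge is the second term: rather than citing \cite[Proposition 6.5]{hk2025}, which states exactly $\pahk[J_x \ne J_x^*] \ll 1/\log x$, you attempt to re-derive it with a heuristic about the $W$-ordering versus the $T$-ordering and the two sum cutoffs agreeing. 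That sketch points in the right direction (the concentration of $\thetasmall + \thetalarge$ does enter) but it is not a proof, as you acknowledge; making it rigorous would amount to reproducing Section~6 of \cite{hk2025}. The clean move is simply to cite that proposition, as the paper does.
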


\begin{proof}
    Note that if $N_x = M_x$ and $J_x = J_x^*$ (with $J_x$ and $J_x^*$ being defined as in Definitions \ref{deff:def of M} and \ref{deff:def of M*}), then $\mathcal E(x)$ must occur. Therefore, 
    \[
        \pahk\bg[\mathcal E(x)^c\bg] \le \pahk\bg[N_x \ne M_x\bg] + \pahk\bg[J_x \ne J_x^*\bg].
    \]
    It follows from \cite[Propositions 2.4 and 6.5]{hk2025} that the right-hand side of the inequality above is $O(\frac{1}{\log x})$.
\end{proof}

\begin{prop}[Bound on $r(t)$]
    \label{prop:bound on r(t)}
    For all $t > 0$, we have the upper bound $r(t) \ll \min\{t, t^{-2}\}$.
\end{prop}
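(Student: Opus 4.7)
The plan is to split by the size of $t$. In the easy range $t \le \lambda_0 = e^{-\gamma}$, every indicator in the definition $h(t) = \sum_{j \ge 1} (\log q_j)\one_{\lambda_{j-1} < t \le \lambda_j}$ is zero because $\lambda_{j-1} \ge \lambda_0 \ge t$ for all $j \ge 1$. Hence $h(t) = 0$ and $r(t) = t$, and since $t < 1$ gives $\min\{t, t^{-2}\} = t$, the bound holds with constant $1$. In any bounded middle range $(\lambda_0, T]$ the function $r$ is bounded above by a constant depending on $T$, while $\min\{t, t^{-2}\}$ is bounded below by a positive constant, so the bound is immediate in this range.

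The main case is $t \to \infty$. Fix large $t$ and let $j = j(t)$ be the unique index with $\lambda_{j-1} < t \le \lambda_j$, so $h(t) = \log q_j$. By the triangle inequality,
\[
    r(t) = |t - \log q_j| \le (\lambda_j - \lambda_{j-1}) + |\lambda_j - \log q_j|.
\]
The gap is handled directly from Definition~\ref{deff:functions h and r}: from $\log \lambda_j - \log \lambda_{j-1} = 1/(v_j q_j)$ we get $\lambda_j - \lambda_{j-1} = \lambda_{j-1}(e^{1/(v_j q_j)} - 1) \ll \lambda_j / q_j$. The substantive work is in bounding $|\lambda_j - \log q_j|$. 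I would invoke a sharp form of Mertens' theorem,
\[
    \sum_{p^v \le x} \frac{1}{vp^v} = \gamma + \log\log x + O\bg(\exp(-c\sqrt{\log x})\bg),
\]
which follows from the Prime Number Theorem with classical de la Vall\'ee Poussin error. Plugging in $x = q_j$ and using the definition $\log \lambda_j = -\gamma + \sum_{p^v \le q_j} 1/(vp^v)$ gives $\log \lambda_j = \log\log q_j + O(\exp(-c\sqrt{\log q_j}))$, and exponentiating yields $|\lambda_j - \log q_j| \ll \log q_j \cdot \exp(-c\sqrt{\log q_j})$.

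To conclude, even classical Mertens (with error $O(1/\log x)$) already implies $\log q_j = t + O(1)$, so $q_j \ge e^{t/2}$ once $t$ is large enough. Both error terms from the previous display are then $\ll t \exp(-c'\sqrt{t})$, which is much smaller than $t^{-2}$, so $r(t) \ll t^{-2}$ for $t$ large. Combined with the two easy regimes, this gives $r(t) \ll \min\{t, t^{-2}\}$ for every $t > 0$. The main obstacle is the strength of Mertens' theorem being used: the classical $O(1/\log x)$ error would only give $|\lambda_j - \log q_j| = O(1)$, far too weak; one genuinely needs a PNT-type refinement (anything of the form $O(1/(\log x)^3)$ in the Mertens error term already suffices) to reach the target bound $O(t^{-2})$.
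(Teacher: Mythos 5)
Your argument is correct and matches the paper's approach: the paper simply states that the bound follows from the Prime Number Theorem and defers the details to the reference \cite{hk2025}, and your proof supplies precisely those details. In particular, you correctly pinpoint the one subtle point — that the classical $O(1/\log x)$ error in Mertens' theorem is not enough, and a genuine PNT-strength refinement is needed to drive the $t^{-2}$ decay.
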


\begin{proof}
    This follows from the Prime Number Theorem. See \cite[Section 2]{hk2025} for more details.
\end{proof}

\begin{prop}[Moment generating function of $\thetasmall + \thetalarge$]
    \label{prop:mgf of Theta}
    Fix $\alpha > 0$, and let $\thetasmall$ and $\thetalarge$ be defined as in Definition \ref{deff:def of Thetas}. We have 
    \[
        \eahk\bg[e^{\alpha(\thetasmall + \thetalarge)}\bg] \ll_\alpha 1.
    \]
    In particular, we have 
    \[
    \pahk\bg[\thetasmall+\thetalarge > \alpha\cdot\log_2 y\bg] \ll_\alpha \frac{1}{\log y}
    \]
    for any $y \ge 2$.
\end{prop}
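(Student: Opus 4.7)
The plan is to apply the Laplace functional (Campbell's formula) for Poisson processes,
\[
\eahk\bgg[\exp\bgg(\sum_{X \in \RS'} f(X)\bgg)\bgg] = \exp\bgg(\int(e^f - 1)\,\dee\nu\bgg),
\]
valid for any nonnegative measurable $f$ and Poisson process $\RS'$ with intensity measure $\nu$. The subtlety is that $\thetalarge$ is not a plain Poisson sum: its indices $i \ge 1$ are determined by the $x$-labelling, which is a global functional of all of $\RS$. To bypass this, I would introduce the dominator
\[
\Theta \coloneqq \sum_{(W,Y) \in \RS,\ Y > e^{-\gamma}} r(Y),
\]
which satisfies $\thetalarge \le \Theta$ almost surely as a subsum of nonnegative terms. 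By the restriction/thinning property of Poisson processes, $\Theta$ depends only on the restriction of $\RS$ to $\{Y > e^{-\gamma}\}$ and $\thetasmall - r_0$ depends only on the restriction to $\{Y \le e^{-\gamma}\}$; these two restrictions are independent, and hence $\eahk[e^{\alpha(\thetasmall + \thetalarge)}] \le \eahk[e^{\alpha \thetasmall}] \cdot \eahk[e^{\alpha \Theta}]$.

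Next I would compute each factor explicitly via Campbell's formula, integrating out $w$ using $\int_0^\infty e^{-wy}\,\dee w = 1/y$:
\[
\eahk[e^{\alpha \thetasmall}] = e^{\alpha r_0}\exp\bgg(\int_0^{e^{-\gamma}}\frac{e^{\alpha r(y)} - 1}{y}\,\dee y\bgg), \qquad \eahk[e^{\alpha\Theta}] = \exp\bgg(\int_{e^{-\gamma}}^\infty \frac{e^{\alpha r(y)} - 1}{y}\,\dee y\bgg).
\]
Both integrals converge by Proposition \ref{prop:bound on r(t)}: near $y = 0$, the bound $r(y) \ll y$ makes the integrand $O_\alpha(1)$, and the first integral is over the bounded interval $[0, e^{-\gamma}]$; near $y = \infty$, the bound $r(y) \ll y^{-2}$ makes the integrand $O_\alpha(y^{-3})$, integrable at infinity; and on any bounded subinterval of $[e^{-\gamma}, \infty)$ the integrand is trivially bounded. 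This establishes $\eahk[e^{\alpha(\thetasmall + \thetalarge)}] \ll_\alpha 1$.

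For the tail bound, an application of Markov's inequality with exponent $\beta = 1/\alpha$ gives
\[
\pahk[\thetasmall + \thetalarge > \alpha \log_2 y] \le e^{-\log_2 y} \cdot \eahk[e^{(1/\alpha)(\thetasmall + \thetalarge)}] \ll_\alpha \frac{1}{\log y},
\]
as desired. The main obstacle in this plan is really the conceptual step of replacing $\thetalarge$ by the dominator $\Theta$, which is what allows the clean use of Poisson independence; once that reduction is in place, the proof becomes a routine integrability check at the two endpoints $y \to 0^+$ and $y \to \infty$.
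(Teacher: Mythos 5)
Your proof is correct and follows essentially the same route as the paper: dominate $\thetalarge$ by the full Poisson sum $\Theta = \sum_{(W,Y)\in\RS,\,Y>e^{-\gamma}} r(Y)$, invoke Campbell's theorem, integrate out $w$ to get a $1/y$ intensity, check convergence via $r(y)\ll\min\{y,y^{-2}\}$, and finish with a Chernoff/Markov bound. The only cosmetic difference is that you factorize $\eahk[e^{\alpha\thetasmall}]\cdot\eahk[e^{\alpha\Theta}]$ via independence of the two restrictions, whereas the paper simply observes $\thetasmall+\thetalarge\le\Theta_\infty\coloneqq r_0+\sum_{(W,Y)\in\RS}r(Y)$ and applies Campbell once; since $\thetasmall+\Theta=\Theta_\infty$ identically, the independence step buys nothing and the single-application version is slightly cleaner, but both are valid.
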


\begin{proof}
    The proof of this proposition is very similar to the proof of \cite[Lemma 2.3]{hk2025}: Let $\Theta_\infty$ be defined as 
    \[
        \Theta_\infty \coloneqq r_0 + \sum_{(W, Y) \in \RS} r(Y).
    \]
    We have $\thetasmall+\thetalarge \le \Theta_\infty$. By using Campbell's Theorem (see \cite[Section 3.2]{kingman1993})
    \[
        \eahk\bg[e^{\alpha(\thetasmall + \thetalarge)}\bg] \le \eahk\bg[e^{\alpha\Theta_\infty}\bg] = \exp\bggg(\alpha r_0 + \int_0^\infty \frac{e^{\alpha r(y)}-1}{y} \dee y\bggg) \ll_\alpha 1,
    \]
    with the convergence of the integral for all fixed $\alpha > 0$ following from $e^{\alpha r(y)} - 1 \ll_\alpha r(y) \ll \min\{y, y^{-2}\}$ (see Proposition \ref{prop:bound on r(t)}). The bound on the probability $\pahk[\thetasmall + \thetalarge > \log_2 y]$ is then a direct application of Chernoff's inequality.
\end{proof}

\section{Proof of Main Lemma \ref{lem:pd boundary lemma}}
\label{sec:pdbl}

Recall the construction of the coupling $(\Omega_\star, \mathcal F_\star, \pahk)$ from the previous section, as well as all the random variables that were introduced there. Recall also the definition of $T_x$ in \eqref{eq:def of Sx and Tx}. We define the infinite set $\mathcal H(y, \kappa)$ and the function $\xi(y)$ as
\begin{align*}
    \mathcal H(y, \kappa) &\coloneqq \bgg\{n \in \NN :  \exists\ d|n \text{ such that } d \in (y/\kappa, y\cdot \kappa)\bgg\} \\
    \xi(y) &\coloneqq (\log y)^{-\delta}(\log_2 y)^{-3/2}.
\end{align*}
We have to keep in mind the monotonicity of the set $\mathcal H(y, \kappa)$ with respect to $\kappa$, i.e. if $1 < \kappa_1 \le \kappa_2$, then $\mathcal H(y, \kappa_1) \subseteq \mathcal H(y, \kappa_2)$. We can rewrite what we need to show in Main Lemma \ref{lem:pd boundary lemma} as
\begin{equation}
    \label{eq:rewriting PDBL}
    \pahk\bgg[N_x \in \mathcal H(y, x^{T_x})\bgg] \ll \xi(y).
\end{equation}
To prove Main Lemma \ref{lem:pd boundary lemma}, we will need four lemmas:
\begin{lemma}
    \label{lem:H set intersects N M M*}
    If $x, y, \kappa$ are real numbers satisfying $2 \le \kappa \le (\log y)^{40}$ and $3 \le y \le x^{2/3}$, then 
    \[
        \pahk\bgg[\{N_x, M_x, M_x^*\} \text{ \emph{intersects} } \mathcal H(y, \kappa)\bgg] \ll (\log \kappa)^\delta \cdot \xi(y).
    \]
\end{lemma}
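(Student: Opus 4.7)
The plan is to reduce the statement to controlling $\pahk[N_x \in \mathcal H(y,\kappa)]$ by using the near-equality of $N_x$, $M_x$ and $M_x^*$ supplied by Proposition \ref{prop:dtv of N M M*}, and then invoke Ford's bound \eqref{eq:ford result} to count integers with a divisor in $(y/\kappa, y\kappa)$. Writing $\mathcal E(x) = \{N_x = M_x = M_x^*\}$, on $\mathcal E(x)$ all three random integers coincide, so the event that $\{N_x, M_x, M_x^*\}$ intersects $\mathcal H(y,\kappa)$ collapses to $\{N_x \in \mathcal H(y,\kappa)\}$. A union bound therefore gives
\[
    \pahk\bgg[\{N_x, M_x, M_x^*\} \cap \mathcal H(y,\kappa) \ne \emptyset\bgg] \le \pahk\bgg[N_x \in \mathcal H(y,\kappa)\bgg] + \pahk\bgg[\mathcal E(x)^c\bgg],
\]
and the last term is $O(1/\log x)$ by Proposition \ref{prop:dtv of N M M*}.

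To bound the first term, I would use the fact that $N_x$ is uniform on $\NN \cap [1, x]$, which gives
\[
    \pahk\bgg[N_x \in \mathcal H(y,\kappa)\bgg] = \frac{H(x, y/\kappa, y\kappa)}{\floor x},
\]
and then apply \eqref{eq:ford result} with lower endpoint $y/\kappa$ and upper endpoint $y\kappa$. Its hypotheses are routine to verify in our range: $\kappa \ge 2$ yields $2(y/\kappa) \le y\kappa$; $\kappa \le (\log y)^{40}$ forces $\kappa^3 \le (\log y)^{120} \le y$ (and hence $y\kappa \le (y/\kappa)^2$) for $y$ large; $y \le x^{2/3}$ together with $\kappa \le (\log y)^{40}$ yields $y\kappa \le x$; and $y/\kappa \ge 100$ holds for $y$ large enough. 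With $w \coloneqq \frac{2\log\kappa}{\log y - \log\kappa}$, the bound $\log\kappa \le 40\log_2 y = o(\log y)$ implies $w \asymp (\log\kappa)/\log y$ and $\log(2/w) \asymp \log_2 y$, so \eqref{eq:ford result} produces
\[
    \frac{H(x, y/\kappa, y\kappa)}{\floor x} \ll \left(\frac{\log\kappa}{\log y}\right)^\delta (\log_2 y)^{-3/2} = (\log\kappa)^\delta \xi(y),
\]
matching the target.

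It remains to verify that the $O(1/\log x)$ coming from $\pahk[\mathcal E(x)^c]$ is absorbed into the main term. Since $\xi$ is decreasing and $y \le x^{2/3}$, we have $\xi(y) \gg (\log x)^{-\delta}(\log_2 x)^{-3/2}$, which dominates $1/\log x$ thanks to $\delta < 1$. Values of $y$ in any fixed bounded range, outside Ford's applicable range, can be handled trivially since both sides of the claimed inequality are then $\gg 1$. The only real obstacle is this bookkeeping; no new idea beyond Ford's theorem and the $N_x$-$M_x$-$M_x^*$ coupling from Proposition \ref{prop:dtv of N M M*} is required.
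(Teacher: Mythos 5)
Your proof is correct and takes essentially the same route as the paper: reduce to bounding $\pahk[N_x \in \mathcal H(y,\kappa)]$ by conditioning on the coupling event $\mathcal E(x) = \{N_x = M_x = M_x^*\}$ via Proposition~\ref{prop:dtv of N M M*}, then identify this probability with $H(x, y/\kappa, y\kappa)/\floor{x}$ and apply Ford's bound. The paper simply cites Ford's general Theorem~1 without verifying ranges; you instead spell out the verification that \eqref{eq:ford result} applies and that $1/\log x$ is dominated by $(\log\kappa)^\delta\xi(y)$, which is exactly the bookkeeping the paper leaves implicit.
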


\begin{proof}
    Define the event $\mathcal E(x) \coloneqq \{N_x = M_x = M_x^*\}$. If $\{N_x, M_x, M_x^*\}$ intersects $\mathcal H(y, \kappa)$, then either $N_x \in \mathcal H(y, \kappa)$ or $\mathcal E(x)$ does not occur. Therefore, by Proposition \ref{prop:dtv of N M M*}, we have 
    \[
        \pahk\bgg[\{N_x, M_x, M_x^*\} \text{ intersects } \mathcal H(y, \kappa)\bgg] \ll \pahk\bgg[ N_x \in \mathcal H(y, \kappa)\bgg] + (\log x)^{-1}.
    \]
    We have 
    \[
        \pahk\bgg[ N_x \in \mathcal H(y, \kappa)\bgg] \ll (\log \kappa)^\delta \cdot \xi(y)
    \]
    by \cite[Theorem 1]{ford2008}.
\end{proof}

\begin{lemma}
\label{lem:multiples in H}
    Let $x, y, \kappa$ be real numbers satisfying $2 \le \kappa \le (\log y)^{30}$ and $2 \le y \le \sqrt x$. Let $d$ be a positive integer satisfying $d \le \sqrt y$. Then
    \[
        \pahk\bgg[d|M_x \text{ \emph{and} } M_x \in \mathcal H(y, \kappa)\bgg] \ll \frac{\tau(d)}{d} \cdot (\log \kappa)^\delta \cdot \xi(y).
    \]
    with $\tau(d)$ being the number of positive divisors of $d$.
\end{lemma}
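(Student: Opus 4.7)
My approach mirrors that of Lemma~\ref{lem:H set intersects N M M*}: first, a sharp integer-counting estimate via Ford's theorem; second, passing to a probability about $M_x$ through the coupling. The extra ingredient is a gcd decomposition that extracts the $\tau(d)$ factor.

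\emph{Integer-counting step.} Let $n$ be an integer with $d\mid n$ and some $e\mid n$ in $(y/\kappa,y\kappa)$. Set $f:=\gcd(e,d)$, so $f\mid d$. A quick check of prime valuations shows $e/f\mid n/d$: for any prime $p$,
\[
v_p(e/f)=\max(0,v_p(e)-v_p(d))\le v_p(n)-v_p(d)=v_p(n/d),
\]
using $e\mid n$ and $d\mid n$. Moreover $e/f\in(y/(f\kappa),y\kappa/f)$, so $n/d\in \mathcal H(y/f,\kappa)$. Thus
\[
\{n\le x:d\mid n,\, n\in\mathcal H(y,\kappa)\}\subseteq\bigcup_{f\mid d}\{dm:m\le x/d,\, m\in\mathcal H(y/f,\kappa)\}.
\]
By Ford's theorem \cite{ford2008}, each set on the right has size $\ll (x/d)(\log\kappa)^\delta\xi(y/f)$. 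Since $f\mid d\le\sqrt y$ gives $y/f\ge\sqrt y$ and hence $\xi(y/f)\asymp\xi(y)$, summing over the $\tau(d)$ divisors of $d$ produces
\[
\#\{n\le x:d\mid n,\, n\in\mathcal H(y,\kappa)\}\ll\frac{x\tau(d)}{d}\,(\log\kappa)^\delta\,\xi(y).
\]

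\emph{Transfer to $M_x$.} Since $N_x$ is uniform on $\NN\cap[1,x]$, the count above immediately yields $\pahk[d\mid N_x,\, N_x\in\mathcal H(y,\kappa)]\ll(\tau(d)/d)(\log\kappa)^\delta\xi(y)$. On the event $\mathcal E(x)=\{N_x=M_x=M_x^*\}$ of Proposition~\ref{prop:dtv of N M M*}, the conditions on $M_x$ and on $N_x$ coincide, so
\[
\pahk\bg[d\mid M_x,\, M_x\in\mathcal H(y,\kappa)\bg]\le\pahk\bg[d\mid N_x,\, N_x\in\mathcal H(y,\kappa)\bg]+\pahk\bg[\mathcal E(x)^c\bg].
\]

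\emph{Main obstacle.} The crude bound $\pahk[\mathcal E(x)^c]\ll 1/\log x$ is not small enough: when $d\asymp\sqrt y$ and $y\asymp\sqrt x$, the main term degrades to $\asymp x^{-1/4}(\log x)^{-\delta}(\log_2 x)^{-3/2}$, far smaller than $1/\log x$. I expect this gap is closed by a pointwise bound $\pahk[M_x=n]\ll 1/x$ uniform in $n\le x$, which should follow from the construction $M_x=J_x\pextra$ of Section~\ref{sec:coupling}, where $\pextra$ is chosen precisely to uniformise the mass of $M_x$. Granted this pointwise bound, one bypasses the coupling error entirely:
\[
\pahk\bg[d\mid M_x,\, M_x\in\mathcal H(y,\kappa)\bg]=\!\!\!\sum_{\substack{n\le x\\ d\mid n,\, n\in\mathcal H}}\!\!\!\pahk[M_x=n]\ll\frac{1}{x}\#\{n\le x:d\mid n,\, n\in\mathcal H(y,\kappa)\},
\]
and the integer-counting step closes the proof.
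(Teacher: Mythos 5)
Your gcd decomposition is correct and mirrors the paper's own: where you write $f=\gcd(e,d)$ and check prime valuations, the paper sets $s=e/\gcd(e,n/d)$, shows $s\mid d$, and concludes the same containment $n/d\in\bigcup_{s\mid d}\mathcal H(y/s,\kappa)$. The step $\xi(y/s)\asymp\xi(y)$ for $s\le d\le\sqrt y$ and the invocation of Ford's bound are also the same as in the paper, which runs the argument on $N_{x/d}$ (exploiting that $N_x$ is exactly uniform) rather than directly on the integer count; these are equivalent.

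The part of your write-up where you diverge is the transfer from $N_x$ to $M_x$, and here you raise a legitimate concern that deserves credit: the crude coupling estimate $\pahk[\mathcal E(x)^c]\ll 1/\log x$ from Proposition~\ref{prop:dtv of N M M*} is \emph{not} dominated by the main term $\frac{\tau(d)}{d}(\log\kappa)^\delta\xi(y)$ once $d$ approaches $\sqrt y$ and $y$ approaches $\sqrt x$. Indeed, with $d\asymp\sqrt y$ the main term is $\asymp y^{-1/2}(\log y)^{-\delta}(\log_2 y)^{-3/2}$, which is exponentially smaller than $1/\log x$. You should be aware that the paper's own proof of this lemma opens with exactly the line
\[
\pahk\bg[d\mid M_x,\ M_x\in\mathcal H(y,\kappa)\bg]=\pahk\bg[d\mid N_x,\ N_x\in\mathcal H(y,\kappa)\bg]+O\bg(\tfrac{1}{\log x}\bg),
\]
and then absorbs this error without comment; so the issue you spotted is present in the paper's argument as written, and is worth raising with the author. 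Moreover the lemma is later applied in Lemma~\ref{lem:thetalarge H bound} summed over all prime powers $p^{vk}\le\sqrt y$, where an additive $1/\log x$ per term would accumulate to something on the order of $\sqrt y/((\log y)^3\log x)$, far exceeding $\xi(y)$; so the $O(1/\log x)$ really cannot be tolerated.

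That said, your proposed repair is not a proof. You conjecture a pointwise density bound $\pahk[M_x=n]\ll 1/x$ uniformly in $n\le x$ and assert it ``should follow from the construction.'' That is the whole ballgame, and it is far from automatic. From Definition~\ref{deff:def of M} we have $\pahk[M_x=n]=\sum_{j\mid n,\ n/j\in\{1\}\cup\primes}\pahk[J_x=j]\cdot\frac{\log(n/j)}{\theta(x/j)}$, and controlling this uniformly requires genuine information about the point masses of $J_x$ (a random product of prime powers built from a Poisson process), not just the total-variation bound quoted in Proposition~\ref{prop:dtv of N M M*}. The statement $\pahk[M_x=n]\ll 1/x$ is plausible but is precisely the kind of local (pointwise, as opposed to $L^1$) estimate that the cited results do not obviously give; you would need to either extract it from the detailed analysis in \cite{arratia02} or \cite{hk2025}, or prove it from scratch. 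As submitted, your argument replaces one unjustified step (the paper's absorption of $O(1/\log x)$) with another (the unverified pointwise bound), so the lemma is not yet proved.
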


\begin{proof}
    Using Proposition \ref{prop:dtv of N M M*}, we have 
    \[
        \pahk\bgg[d|M_x \text{ and } M_x \in \mathcal H(y, \kappa)\bgg] = \pahk\bgg[d|N_x \text{ and } N_x \in \mathcal H(y, \kappa)\bgg] + O\bgg(\frac{1}{\log x}\bgg).
    \]
    Suppose that $a$ and $b$ are positive integers satisfying $a|db$ and that $a \in (y/\kappa, y\cdot \kappa)$. We must have $\frac{a}{\gcd(a, b)}|d$. Therefore, the integer $\gcd(a, b)$ is in the set $\{a/s : s|d\}$, which is itself a subset of the union of open intervals $\bigcup_{s|d} (\frac{y}{s\kappa}, \frac{y\kappa}{s})$. It follows that if $db \in \mathcal H(y, \kappa)$, then $b \in \bigcup_{s|d} \mathcal H(y/s, \kappa)$. Thus, we have 
    \begin{align*}
        \pahk\bgg[d|N_x \text{ {and} } N_x \in \mathcal H(y, \kappa)\bgg] &\le \sum_{s|d} \pahk\bgg[d|N_x \text{ {and} } N_x/d \in \mathcal H(y/s, \kappa)\bgg] \\
        &= \frac{\floor{x/d}}{\floor x} \sum_{s|d} \pahk\bgg[N_{x/d} \in \mathcal H(y/s, \kappa)\bgg] \\
        &\ll \frac{\tau(d)}{d} \cdot \max_{s|d} \pahk\bgg[N_{x/d} \in \mathcal H(y/s, \kappa)\bgg].
    \end{align*}
    By applying Lemma \ref{lem:H set intersects N M M*}, we finish the proof.
\end{proof}

\begin{lemma}
\label{lem:thetasmall H bound}
    For all $2 \le y \le x$, we have 
    \[
        \pahk\bgg[M_x^* \in \mathcal H(y, e^{10\cdot \thetasmall})\bgg] \ll \xi(y).
    \]
\end{lemma}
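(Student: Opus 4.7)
The key observation driving the proof is an independence fact built into the coupling: the random variable $\thetasmall = r_0 + \sum_{(W,Y)\in \RS,\ Y \le e^{-\gamma}} r(Y)$ is a functional of $\RS$ restricted to the half-plane $\{Y \le e^{-\gamma}\}$, whereas $M_x^* = J_x^* \cdot \pextra^*$ is constructed from $\RS^*$ (which uses only points of $\RS$ with $Y > e^{-\gamma}$, via the map $\psi$ in Definition~\ref{deff:PP RS* and label}) together with the uniform $U_1$. By the independence of the restrictions of a Poisson point process to disjoint Borel sets, $\thetasmall$ is independent of $M_x^*$. This lets us essentially treat $\kappa = e^{10\thetasmall}$ as a parameter independent of the integer whose divisors we are probing.

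With this in hand, the plan is to split according to whether $\thetasmall$ is large or small. Set $\Theta_0 \coloneqq 4\log_2 y$; then
\[
    \pahk\bg[M_x^* \in \mathcal H(y, e^{10\thetasmall})\bg] \le \pahk\bg[\thetasmall > \Theta_0\bg] + \pahk\bg[M_x^* \in \mathcal H(y, e^{10\thetasmall}),\ \thetasmall \le \Theta_0\bg].
\]
The first term is controlled directly by Proposition \ref{prop:mgf of Theta}: Markov's inequality applied to $e^{\thetasmall}$, together with the bound $\eahk[e^{\thetasmall}] \le \eahk[e^{\thetasmall+\thetalarge}] \ll 1$, yields $\pahk[\thetasmall > 4\log_2 y] \ll (\log y)^{-4} \ll \xi(y)$, which is easily acceptable.

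For the second term, I condition on $\thetasmall$. On the event $\{\thetasmall \le \Theta_0\}$ we have $e^{10\thetasmall} \le (\log y)^{40}$, so for each realization $\thetasmall = \theta \in [r_0, \Theta_0]$ the independence of $M_x^*$ from $\thetasmall$ and Lemma \ref{lem:H set intersects N M M*} (applied with $\kappa = e^{10\theta}$) give
\[
    \pahk\bg[M_x^* \in \mathcal H(y, e^{10\theta})\bg] \ll (10\theta)^\delta \cdot \xi(y) \ll \theta^\delta \cdot \xi(y).
\]
Integrating against the law of $\thetasmall$ and using $\eahk[\thetasmall^\delta] \le 1 + \eahk[\thetasmall] \ll 1$ (which follows from Proposition \ref{prop:mgf of Theta}) gives the desired bound $\ll \xi(y)$.

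The only genuine subtlety is the independence step; the rest is routine conditioning and a single application of Ford's order-of-magnitude result encapsulated in Lemma \ref{lem:H set intersects N M M*}. A minor bookkeeping point is the outer range of $y$: Lemma \ref{lem:H set intersects N M M*} is stated for $y \le x^{2/3}$, so for the complementary sliver $x^{2/3} < y \le x$ one should observe that $\xi(y) \asymp \xi(x^{2/3})$ and either reduce to the smaller range by replacing $y$ with $x^{2/3}$ via inclusion $\mathcal H(y,\kappa) \subseteq \mathcal H(x^{2/3}, \kappa \cdot y/x^{2/3})$ (after noting that the co-divisor argument from Lemma \ref{lem:multiples in H} lets us move between $y$ and $x/y$), or simply note the trivial bound suffices there.
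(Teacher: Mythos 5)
Your proposal is correct and takes essentially the same route as the paper: the key step in both is the observation that $M_x^*$ is measurable with respect to $U_1$ and the restriction of $\RS$ to $\{Y > e^{-\gamma}\}$ while $\thetasmall$ is measurable with respect to the restriction to $\{Y \le e^{-\gamma}\}$, giving independence; both then condition on $\thetasmall$, truncate at a threshold of order $\log_2 y$ to ensure $\kappa = e^{10\thetasmall}$ lies in the admissible range for Lemma~\ref{lem:H set intersects N M M*}, and close with the moment bound from Proposition~\ref{prop:mgf of Theta}. The minor variations (threshold $4\log_2 y$ instead of $\log_2 y$; bounding $\eahk[\thetasmall^\delta]$ directly rather than via $\thetasmall^\delta \le e^{\thetasmall}$) are cosmetic, and your remark about the $x^{2/3} < y \le x$ sliver is a caveat the paper itself does not explicitly address.
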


\begin{proof}
    Let $\Sigma_1$ be the $\sigma$-algebra generated by $U_1$ and the restriction of $\RS$ on the set $(0, \infty) \times (e^{-\gamma}, \infty)$, and let $\Sigma_2$ be the $\sigma$-algebra generated by the restriction of $\RS$ on the set $(0, \infty) \times (0, e^{-\gamma}]$. Note that $\Sigma_1$ and $\Sigma_2$ are independent $\sigma$-algebras. Since $M_x^*$ is $\Sigma_1$-measurable and $\thetasmall$ is $\Sigma_2$-measurable, we conclude that $M_x^*$ and $\thetasmall$ are independent. Thus, over all the coupling $\Omega_\star$, we can bound the conditional expectation 
    \[
        \eahk\bgg[\one_{M_x^* \in \mathcal H(y, e^{10\cdot \thetasmall})} \cdot \one_{\thetasmall \le \log_2 y}\ \bgg|\ \thetasmall\bgg] \ll (\thetasmall)^\delta \cdot \xi(y) \le e^{\thetasmall} \cdot \xi(y).
    \]
    Therefore, 
    \begin{align*}
        \pahk\bgg[M_x^* \in \mathcal H(y, e^{10 \cdot\thetasmall})\bgg] \ll \eahk\bgg[e^{\thetasmall}\bgg] \cdot \xi(y) + \pahk\bgg[\thetasmall > \log_2 y\bgg].
    \end{align*}
    The lemma follows directly from applying Proposition \ref{prop:mgf of Theta}.
\end{proof}

\begin{lemma}
\label{lem:thetalarge H bound}
    For all $2 \le y \le x$, we have 
    \[
        \pahk\bgg[M_x \in \mathcal H(y, e^{10 \cdot \thetalarge})\bgg] \ll \xi(y).
    \]
\end{lemma}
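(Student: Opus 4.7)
First, using Proposition~\ref{prop:dtv of N M M*}, we have $\pahk[M_x \ne M_x^*] \ll 1/\log x \ll \xi(y)$ in the range $2 \le y \le x$ (since $\delta < 1$, $1/\log x \le 1/\log y \ll \xi(y)$). So it suffices to bound $\pahk[M_x^* \in \mathcal H(y, e^{10\thetalarge})]$. This mirrors Lemma~\ref{lem:thetasmall H bound}, but with the crucial obstruction that $M_x^*$ and $\thetalarge$ are not independent: the former is measurable with respect to $\sigma(\RS^*, U_1)$, while $\thetalarge$ is determined by the original $Y_i$'s underlying $\RS^*$ together with the $x$-labelling of $\RS$. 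The clean $\sigma$-algebra split used for $(M_x^*, \thetasmall)$ thus fails here.

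My plan is to replace the independence argument with a dyadic decomposition in $\thetalarge$:
\[
\pahk\bg[M_x^* \in \mathcal H(y, e^{10\thetalarge})\bg] \le \sum_{k \ge 0} \pahk\bg[M_x^* \in \mathcal H(y, e^{10(k+1)}),\ k \le \thetalarge < k+1\bg].
\]
For each $k$, the bound $\one_{\thetalarge \ge k} \le e^{\alpha(\thetalarge - k)}$ (Markov, with $\alpha>0$ to be chosen large) yields
\[
\pahk\bg[M_x^* \in \mathcal H(y, e^{10(k+1)}),\ \thetalarge \ge k\bg] \le e^{-\alpha k}\,\eahk\bg[e^{\alpha\thetalarge}\one_{M_x^* \in \mathcal H(y, e^{10(k+1)})}\bg].
\]
If I can establish a weighted version of Lemma~\ref{lem:H set intersects N M M*} asserting that $\eahk[e^{\alpha\thetalarge}\one_{M_x^* \in \mathcal H(y, \kappa)}] \ll_\alpha (\log \kappa)^\delta \xi(y)$ for $\kappa \ge 2$, then summing the resulting $\sum_{k\ge 0} e^{-\alpha k}(k+1)^\delta \xi(y) \ll_\alpha \xi(y)$ closes the argument.

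The main obstacle is proving this weighted estimate, which is where the Poisson--Dirichlet structure must be exploited. My strategy is to condition on the image process $\RS^*$. Once $\RS^*$ is fixed, $M_x^*$ depends only on $U_1$ (through $\pextra^*$), while the original $Y_i$'s underlying the points of $\RS^*$ become conditionally independent with explicit densities proportional to $1/y$ on the intervals $(\lambda_{j(i)-1}, \lambda_{j(i)}]$. Therefore $M_x^*$ and $\thetalarge$ are conditionally independent given $\RS^*$, and the expectation factors. A Campbell-type computation will control $\eahk[e^{\alpha\thetalarge}\mid \RS^*]$ in an integrable fashion over $\RS^*$, the key integrability stemming from $r(y) \ll \min\{y, y^{-2}\}$ (Proposition~\ref{prop:bound on r(t)}). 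The residual conditional probability $\pahk[M_x^* \in \mathcal H(y, \kappa) \mid \RS^*]$ is then handled by unfolding the divisor structure of $J_x^* = \prod_i Q_i^*$ and applying Lemma~\ref{lem:multiples in H} to the relevant divisors, which recovers the $(\log \kappa)^\delta \xi(y)$ bound.

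Finally, truncating the dyadic sum at $k \sim \log_2 y$ costs only $\pahk[\thetalarge > \log_2 y] \ll 1/\log y \ll \xi(y)$ by Proposition~\ref{prop:mgf of Theta}. Combining everything gives $\pahk[M_x \in \mathcal H(y, e^{10\thetalarge})] \ll \xi(y)$, completing the argument.
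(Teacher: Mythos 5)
Your opening move (replacing $M_x$ by $M_x^*$ via Proposition~\ref{prop:dtv of N M M*}) and the dyadic decomposition of $\thetalarge$ both match the paper in spirit. But the crux of the proof is the weighted estimate $\eahk\bg[e^{\alpha\thetalarge}\one_{M_x^*\in\mathcal H(y,\kappa)}\bg]\ll_\alpha(\log\kappa)^\delta\xi(y)$, and your sketch for it does not go through. The conditional independence of $M_x^*$ and $\thetalarge$ given $\RS^*$ is a correct observation, but it does not decouple the two factors: after conditioning you are left with $\eahk\bg[F(\RS^*)\,G(\RS^*)\bg]$, where $F(\RS^*)=\eahk[e^{\alpha\thetalarge}\mid\RS^*]$ and $G(\RS^*)=\pahk[M_x^*\in\mathcal H(y,\kappa)\mid\RS^*]$ are \emph{both} functionals of $\RS^*$ and are strongly \emph{positively} correlated (configurations of $\RS^*$ with many points near the discontinuities of $h$ make $\thetalarge$ large and also give $M_x^*$ more divisors near $y$). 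A Campbell computation controls $\eahk[F(\RS^*)]$ in the mean, but $F(\RS^*)$ is unbounded in $\RS^*$, so you cannot pull it out as an $O_\alpha(1)$ constant to reduce to $\pahk[M_x^*\in\mathcal H]$; and a Cauchy--Schwarz or H\"older split between $F$ and $G$ loses a power of $\xi(y)$. Nothing in the sketch explains how the correlation between $F$ and $G$ is absorbed. The vague last step, ``unfolding the divisor structure of $J_x^*$ and applying Lemma~\ref{lem:multiples in H},'' is precisely where all the work sits and is left unspecified.

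The paper handles this correlation head-on rather than trying to condition it away. It writes $\thetalarge\ll\sum_{p^v} A_{p^v}/(\log p^v)^2$ with $A_{p^v}:=\#\{i\ge1:h(Y_i)=\log p^v\}$, then uses $A_{p^v}\le\sum_{k}\one_{A_{p^v}\ge k}+\text{error}$ together with the implication $A_{p^v}\ge k\Rightarrow p^{vk}\mid M_x$ to convert the $\thetalarge$-weight into divisibility events for $M_x$. The joint event $\{p^{vk}\mid M_x\text{ and }M_x\in\mathcal H(y,\kappa)\}$ is then bounded by Lemma~\ref{lem:multiples in H}, whose $\tau(d)/d$ factor is exactly the quantitative expression of the positive correlation you need to control. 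This expansion in prime-power multiplicities of $M_x$, not conditional independence given $\RS^*$, is the missing idea in your proposal.
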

\begin{proof}
    For any prime power $p^v$, we define $A_{p^v} \coloneqq \#\{i \ge 1 : h(Y_i) = \log p^v\}$. We must have
    \begin{align*}
        A_{p^v} = \sum_{1 \le k < \frac{\log y}{2\log p^v}} \one_{k \le A_{p^v} < \frac{\log y}{2\log p^v}} + A_{p^v}\one_{A_{p^v} \ge \frac{\log y}{2\log p^v}} \le \sum_{1 \le k < \frac{\log y}{2\log p^v}} \one_{A_{p^v} \ge k} + \frac{2\log p^v\cdot A_{p^v}^2}{\log y}.
    \end{align*}
    If $A_{p^v} \ge k$, then $p^{vk}|M_x$. Therefore,
    \begin{align*}
        A_{p^v} \le \sum_{1 \le k < \frac{\log y}{2\log p^v}} \one_{p^{vk}|M_x} + \frac{2\log p^v \cdot A_{p^v}^2}{\log y}.
    \end{align*}
    With this inequality and Proposition \ref{prop:bound on r(t)}, we have
    \begin{align}
    \begin{split}
        \label{eq:thetalarge decomp}
        \thetalarge &\ll \sum_{i \ge 1} \frac{1}{h(Y_i)^2} = \sum_{p^v} \frac{A_{p^v}}{(\log p^v)^2} \\
        &\le \underset{\substack{p \ \text{prime and } v, k \ge 1 \\ p^{vk} \le \sqrt y}}{\sum\sum\sum} \frac{\one_{p^{vk}|M_x}}{(\log p)^2} + \frac{2}{\log y}\sum_{p^v} \frac{A_{p^v}^2}{\log p^v}.
    \end{split}
    \end{align}
    Since $A_{p^v} \le \#\{(W, Y) \in \RS : h(Y) = \log p^v\}$, and this last random variable is Poisson distributed with paramater $\frac{1}{vp^v}$, then $\EE[A_{p^v}^2] \ll \frac{1}{vp^v}$. It follows that
    \begin{equation}
    \label{eq:large Apv}
        \frac{2}{\log y}\sum_{p^v} \frac{\EE[A_{p^v}^2]}{\log p^v} \ll \frac{1}{\log y}.
    \end{equation}
    
    Using Proposition \ref{prop:mgf of Theta}, Lemma \ref{lem:H set intersects N M M*} and the inequalities \eqref{eq:thetalarge decomp} and \eqref{eq:large Apv}, we have the decomposition
    \begin{align*}
        \pahk\bgg[M_x \in \mathcal H(y, e^{10\cdot \thetalarge})\bgg] &\le \sum_{1 \le j \le \log_3 y} \pahk\bgg[M_x \in \mathcal H(y, e^{10\cdot \thetalarge}) \text{ and } \thetalarge \in (e^j, e^{j+1}]\bgg] + O\bg(\xi(y)\bg) \\
        &\le \sum_{1 \le j \le \log_3 y} e^{-j}\cdot \eahk\bg[\thetalarge \cdot \one_{M_x \in \mathcal H(y, e^{10\cdot e^{j+1}})}\bg] + O\bg(\xi(y)\bg) \\
        &\ll \underset{\substack{p \text{ prime and } v, j, k \ge 1 \\ j \le \log_3 y \\ p^{vk} \le \sqrt y}}{\sum\sum\sum\sum} \frac{\pahk\bg[p^{vk}|M_x \text{ and } \one_{M_x \in \mathcal H(y, e^{10\cdot e^{j+1}})}\bg]}{e^j(\log p)^2} + O(\xi(y))
    \end{align*}
    By applying Lemma \eqref{lem:multiples in H}, we arrive at
    \[
        \pahk\bgg[M_x \in \mathcal H(y, e^{10\cdot \thetalarge})\bgg] \ll \xi(y) \cdot \sum_{j\ge 1} e^{j(\delta-1)} \cdot  \sum_p \sum_{v \ge 1} \sum_{k \ge 1} \frac{vk+1}{p^{vk}(\log p)^2}.
    \]
    Using the inequalities $vk+1 \le 2vk$ and $p^{vk} \ge p2^{v+k-2}$, we have
    \[
        \pahk\bgg[M_x \in \mathcal H(y, e^{10\cdot \thetalarge})\bgg] \ll \xi(y) \cdot \left(\sum_{j\ge 1} e^{j(\delta-1)}\right) \cdot  \left(\sum_p \frac{1}{p(\log p)^2}\right) \cdot  \left(\sum_{v \ge 1}  \frac{v}{2^v}\right)^2.
    \]
    The series on the right-hand side are all convergent, which proves the lemma.
\end{proof}

\begin{proof}[Proof of Main Lemma \ref{lem:pd boundary lemma}]
    Let $r_0 \coloneqq \sup_{t >0} r(t)$. From the definition of $\mathbf L$, $\mathbf V$, $\thetasmall$ and $\thetalarge$, we have 
    \[
        T_x(\log x) = 5\sum_{i \ge 1} r(V_i\log x) = 5\sum_{i \ge 1}r(L_i\log x) \le 5\bggg(r_0 + \sum_{i \ge 1} r(Y_i)\bggg) \le 10\max\{\thetasmall, \thetalarge\}.
    \]
    It follows that
    \[
        \pahk[N_x \in \mathcal H(y, x^{T_x})] \le \pahk[M_x \in \mathcal \mathcal H(y, e^{10\cdot \thetalarge})] + \pahk[M^*_x \in \mathcal \mathcal H(y, e^{10\cdot \thetasmall})] + \pahk[\mathcal E(x)^c].
    \]
    with $\mathcal E(x) = \{N_x = M_x = M_x^*\}$. By combining Proposition \ref{prop:dtv of N M M*}, with Lemmas \ref{lem:thetasmall H bound} and \ref{lem:thetalarge H bound}, we prove \eqref{eq:rewriting PDBL} and thus we finish the proof of Main Lemma \ref{lem:pd boundary lemma}.
\end{proof}

\end{document}